\theoremstyle{plain}
\newtheorem{theorem}{Theorem}
\newtheorem{proposition}[theorem]{Proposition}
\newtheorem{lemma}[theorem]{Lemma}
\theoremstyle{definition}
\newtheorem{definition}[theorem]{Definition}
\def\ho{\H o}
\def\ra{\rightarrow}
\def\ds{\displaystyle}
\def\ba{\begin{array}}
\def\ea{\end{array}}
\def\bi{\begin{itemize}}
\def\ei{\end{itemize}}
\def\mE{\mathbb{E}}
\def\cX{\mathcal{X}}
\def\ds{\displaystyle}
\def\fixcu{\underline{c}}
\def\fixco{\overline{c}}
\begin{document}

\title{Mixing times of Markov chains on a cycle\\ with additional long range connections}

\author{Bal\'azs Gerencs\'er
\thanks{Universit\'e catholique de Louvain, \texttt{balazs.gerencser@uclouvain.be}. This paper presents research results partially supported by the Concerted
  Research Action (ARC) Large Graphs and Networks of the French
  Community of Belgium and the Belgian Network DYSCO (Dynamical
  Systems, Control, and Optimization), funded by the Interuniversity
  Attraction Poles Programme, initiated by the Belgian State, Science
  Policy Office.}
\thanks{On leave from the MTA R\'enyi Institute.}
}

\date{}

\maketitle

\begin{abstract}
We develop Markov chain mixing time estimates for a class of Markov
chains with restricted transitions. We assume transitions may occur
along a cycle of $n$ nodes and on $n^\gamma$ additional edges, where $\gamma
< 1$. We find that the mixing times of reversible Markov chains
properly interpolate between the mixing times of the cycle with no added
edges and
of
the cycle with $cn$ added edges (which is in turn a Small World
Network model). In the case of non-reversible Markov-chains, a
considerable gap remains between lower and upper bounds, but
simulations give hope to experience a significant speedup compared to the reversible case. 

\noindent \emph{2010 Mathematics Subject Classification:}
Primary 60J10. Secondary 05C80, 05C40.

\noindent \emph{Keywords:} Markov chain, mixing time, random graph, reversibility.

\end{abstract}


\section{Introduction}

Mixing time is an important quantity arising in numerous
applications. In Markov Chain Monte Carlo (MCMC) simulations, as
described in the now classic papers Metropolis et
al. \cite{metropolis:alg} and Hastings \cite{hastings:mmixing}, 
mixing time can be interpreted as the time needed to generate a single
sample of a given distribution with prescribed accuracy. See Jerrum
\cite{jerrum:mcmc1998} for a modern exposition on the subject.

Another current, hot area of application is the the theory of
distributed average consensus algorithms. The flow of such an
algorithm can be viewed as the evolution of a distributions on some
states according to a Markov chain. For details see Olshevsky,
Tsitsiklis \cite{olsh_tsits:consensus_speed2009} or Boyd et
al. \cite{boyd_and_al:gossip2006}. Here the time needed to get within
a certain neighborhood of the average value can be quantified by the
mixing time. Motivated by these and other applications, the estimation
of mixing time is in the center of interest.

The present paper is linked to two previous results. First, a random
walk on a cycle of $n$ nodes has a mixing time of $cn^2$. This is the
result of the central limit theorem which tells us that we need $ck^2$
steps to move away to a distance of $k$ from the starting point.
Second, when random edges are added to the cycle with a density of $2n^{-1}$
resulting in roughly $n$ random edges, the mixing time drops to
$c\log^2 n$, see Durrett \cite{durrett:rgd}, Addario-Berry 
and Lei \cite{addarioberry:swnmixing2012}.

Our goal is to investigate the case in between, namely when the number
of added random edges is sublinear, with a density $2n^{-\alpha}$ for $\alpha \in
(1,2)$. This gives roughly $n^{2-\alpha}$ new edges. We find that the mixing time in the reversible case is
$n^{2-2\alpha}$ (up to logarithmic factors). For a non-reversible
random walk however, when symmetry is broken, the lower bound on the
mixing time drops to $n^{1-\alpha}$ (up to logarithmic factors again). Determining the exact value is
still open, but simulations indicate that a strong speedup is present
for this case. Precise description of the random graph models are
given in Definition \ref{DEF:MODELS}, the exact mixing time bounds are
presented in Theorem \ref{THM:M1HOMO} through \ref{THM:M3HOMO}.

The choice of the cycle as a base graph is justified by the following
two reasons. First, in the case of $n$ added random edges, this is
exactly the already well-known model of Newman et
al. \cite{newman:swn2000} for modeling Small World Networks. Second,
there is a very natural way of defining non-reversible random walks on
the graph models considered simply by introducing a drift along the
cycle, increasing transition probabilities in one direction and
decreasing them in the other. When the base graph is chosen to be
another connected graph, the case of $n$ random edges and reversible
random walks is treated in depth by Krivelevich, Reichman and
Samotij \cite{krivelevich2013smoothed}.

The rest of the paper is structured as follows. 
Section \ref{SEC:PRELIMINARIES} presents general definitions and tools
used in the paper.
In Section \ref{SEC:RANDEDGEMODELS} we describe the connectivity
graphs we work with. In
Section \ref{SEC:RANDCOND} we work out the intermediate estimates
required to complete our work.
The main results are deduced in Section \ref{SEC:RANDMIXTIME}. We
close with conclusions in Section \ref{SEC:CONC}.

\section{Preliminaries}
\label{SEC:PRELIMINARIES}

In order to be self-contained we present the definition of mixing time together with some related concepts.
We will work with aperiodic, irreducible Markov chains on a finite state space
$\cX$ which has size $n = |\cX|$. The set of probability distributions on the state space $\cX$
will be denoted by $\mathcal{P}(\cX)$.
In order to define mixing time we need a metric to measure the distance of probability
distributions.

One of the widely used options is the \emph{total
variation distance} defined as follows:
\begin{definition}
Given two probability measures $\mu, \nu$ on $\cX$, the \emph{total variation distance} is
defined as
$$\|\mu - \nu \|_{\rm TV}=\max_{A\subseteq\cX}|\mu(A)-\nu(A)|.$$
\end{definition}
When considering Markov chain the transition matrix is denoted by $P=(p_{ij})$, with $p_{ij}$
referring to
the probability of moving from state $i$ to state $j$, while
$\pi$ stands for the unique stationary distribution (if it exists).
We can now define the central notion of this paper.
\begin{definition}
For a Markov chain having a unique stationary
distribution we define the mixing time of the chain for any $\varepsilon > 0$ as
$$t_{\rm mix}(P,\varepsilon)=\max_{\sigma\in
  \mathcal{P}(\cX)}\min\left\{k:\|\sigma P^{k}-\pi\|_{{\rm
      TV}}\le\varepsilon\right\}.$$
\end{definition}
For the rest of the paper $\varepsilon$ is considered to be fixed
  thus we use the simplified notation $t_{\rm mix}(P)$ or even
  $t_{\rm mix}$ when the argument $P$ is obvious from the context.

Often we are not only interested in the behavior of a single Markov
chain, but also in the achievable performance by modifying the Markov chain while
keeping the structure. For this we use the following definition.
\begin{definition}
The \emph{connectivity graph} of a Markov chain is a graph on the states of
the Markov chain. We connect nodes $i\neq j$ if either $p_{ij}> 0$ or
$p_{ji} > 0$.
\end{definition}
We restrict ourselves to those cases where the unique stationary distribution is
uniform. For the transition matrix this translates to the condition of
being doubly stochastic.

We are also interested in the role of the symmetry property, \emph{reversibility}.
\begin{definition}
A Markov chain is \emph{reversible} if starting from the stationary
distribution $\pi$, the probability of the consecutive pair $(i,j)$ is
the same as the probability of the consecutive pair $(j,i)$. Formally:
$$\pi_ip_{ij}=\pi_jp_{ji}\quad\forall i,j.$$
\end{definition}
The usefulness of the separation of reversible and non-reversible
Markov chains is widely recognized in the literature, see e.g.,
Montenegro and Tetali \cite{montenegro_tetali:slowmixing2006}. Often
it is more convenient
to prove certain properties for reversible chains, and there are
tighter general bounds on the mixing time for them. The reason to consider also
non-reversible chains is the fact that they may deliver much faster
mixing than similar reversible chains.

A tool we heavily rely on as a proxy to the mixing time is the \emph{conductance} of a Markov chain, introduced by
Jerrum and Sinclair \cite{jerrum_sinclair:1988conductance}. This
is a quantity indicating the capacity of
the worst bottleneck of the chain when the state space is
split into two parts.
\begin{definition}
For any $A,B\subset \cX,~A\cap B=\emptyset$ we set
$$Q(A,B) = \sum_{i\in A,j\in B}\pi_i p_{ij},$$
representing the flow from $A$ to $B$ for the stationary distribution.
The \emph{conductance} of a Markov chain is defined as
$$\Phi=\min_{\emptyset\ne S\subsetneq
  \cX}\Phi(S) = \min_{\emptyset\ne S\subsetneq
  \cX}\frac{Q(S,S^C)}{\pi(S)\pi(S^C)}=\min_{\emptyset\ne S\subsetneq
  \cX}\frac{\sum_{i\in S,j\in S^c}\pi_ip_{ij}}{\pi(S)\pi(S^C)},$$
where $S^C=\cX\setminus S$, the complement of the set $S$.
\end{definition}
This neat concept has evolved since
its birth into different refined variants such as
average conductance (see Lov\'asz, Kannan
\cite{lovasz:1999faster}), and blocking conductance (see Kannan,
Lov\'asz, Montenegro \cite{kannanetal:conductance2006}).

The importance is in the fact that this geometric
quantity can be directly related to mixing times. The
lower bound is easy to verify:
\begin{proposition}
  \label{PRP:CONDMIXTIME1}
  There is a constant $c>0$ such that for any Markov chain we have
  $$c\frac{1}{\Phi} \le t_{\rm mix}.$$
  The constant $c$ depends only on $\varepsilon$, showing up in
  the definition of the mixing time.
\end{proposition}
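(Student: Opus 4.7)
The plan is to exhibit a specific starting distribution whose total variation distance from $\pi$ shrinks at most linearly in $k$, with rate governed by $\Phi$. Let $S^\ast$ be a minimizer in the definition of $\Phi$, so $\Phi(S^\ast)=\Phi$. Any Markov chain with stationary distribution $\pi$ satisfies the flow-balance identity $Q(S,S^c)=Q(S^c,S)$ (summing $\sum_i\pi_i p_{ij}=\pi_j$ over $j\in S$ and rearranging), hence $\Phi(S)$ is symmetric under $S\leftrightarrow S^c$, and I may assume $\pi(S^\ast)\le 1/2$. My candidate starting distribution is $\sigma(i)=\pi(i)/\pi(S^\ast)$ for $i\in S^\ast$ and $0$ otherwise, so $\sigma P^0(S^\ast)=1$ while $\pi(S^\ast)\le 1/2$.

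Write $\mu_k=\sigma P^k$. Expanding $\mu_{k+1}(S^\ast)$ and discarding the non-negative inflow from $S^{\ast c}$ into $S^\ast$ yields the one-step decrement bound
$$\mu_k(S^\ast)-\mu_{k+1}(S^\ast)\le \sum_{i\in S^\ast,\,j\in S^{\ast c}}\mu_k(i)\,p_{ij}.$$
To turn this into a bound involving the stationary flow $Q(S^\ast,S^{\ast c})$, I plan to prove the pointwise estimate $\mu_k(i)\le \pi(i)/\pi(S^\ast)$ for all $k$ and $i$. The key identity is
$$\frac{\mu_{k+1}(j)}{\pi(j)}=\sum_i \frac{\mu_k(i)}{\pi(i)}\,\hat p_{ji},\qquad \hat p_{ji}=\frac{\pi(i)\,p_{ij}}{\pi(j)},$$
where $\hat P$ is the time-reversed chain; since $\sum_i \hat p_{ji}=1$ by stationarity of $\pi$, each ratio $\mu_{k+1}(j)/\pi(j)$ is a convex combination of the previous ones, and $\max_i \mu_k(i)/\pi(i)$ is non-increasing. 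Starting from $\max_i \mu_0(i)/\pi(i)=1/\pi(S^\ast)$, the desired bound persists for all $k$.

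Substituting this estimate into the decrement inequality gives $\mu_k(S^\ast)-\mu_{k+1}(S^\ast)\le Q(S^\ast,S^{\ast c})/\pi(S^\ast)=\Phi\,\pi(S^{\ast c})$, and induction yields $\mu_k(S^\ast)\ge 1-k\Phi\,\pi(S^{\ast c})$. Evaluating the total variation distance on the witness set $S^\ast$,
$$\|\mu_k-\pi\|_{\rm TV}\ge \mu_k(S^\ast)-\pi(S^\ast)\ge \pi(S^{\ast c})(1-k\Phi)\ge \tfrac{1}{2}(1-k\Phi),$$
using $\pi(S^{\ast c})\ge 1/2$. For this quantity to drop to $\varepsilon$ one needs $k\ge (1-2\varepsilon)/\Phi$, so $t_{\rm mix}(P,\varepsilon)\ge (1-2\varepsilon)/\Phi$, giving $c=1-2\varepsilon$, positive under the usual convention $\varepsilon<1/2$. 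The main obstacle is the pointwise bound on $\mu_k/\pi$: for reversible chains it is essentially immediate, whereas in the non-reversible setting one must route through the time-reversed chain as above.
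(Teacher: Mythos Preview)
Your argument is correct. The paper does not actually supply a proof of this proposition: it is stated as a well-known fact with the remark that ``the lower bound is easy to verify,'' and no further justification is given. Your write-up is exactly the standard bottleneck argument (as in, e.g., Levin--Peres--Wilmer), and you have handled the one point that trips people up in the non-reversible case---the pointwise control $\mu_k(i)\le \pi(i)/\pi(S^\ast)$---cleanly via the time-reversed kernel $\hat P$. The computation of the explicit constant $c=1-2\varepsilon$ is also right, and your remark that this requires $\varepsilon<1/2$ matches the paper's later use of $\varepsilon=1/8$ in Theorem~\ref{THM:SLOWMIX}.
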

It is also possible to deduce an upper bound, as seen by
Jerrum and Sinclair
\cite{sinclair_jerrum:rapidmixing1989}:
\begin{theorem}
  \label{THM:JSMIXTIME}
  There is a constant $c>0$ 
  such that for any aperiodic, irreducible,
  reversible Markov chain the following bound for the mixing time
  holds:
  $$t_{\rm mix} \le c\frac{1}{\Phi^2}\log\left(\frac{1}{\pi_*}\right),$$
  where $\pi_*$ refers to the lowest value of the stationary
  distribution, $\pi_* = \min_i \pi_i$. Currently the stationary
  distribution is uniform, thus $\pi_* = 1/n.$
  The constant $c$ depends only on $\varepsilon$, showing up in
  the definition of the mixing time.
\end{theorem}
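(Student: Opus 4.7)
The plan is to relate the conductance to the spectral gap of $P$ via a discrete Cheeger-type inequality, and then convert the spectral gap into a mixing time bound through the standard $L^2$ decay estimate. By reversibility, $P$ is self-adjoint on $L^2(\pi)$ and hence admits real eigenvalues $1 = \lambda_1 \ge \lambda_2 \ge \dots \ge \lambda_n \ge -1$. To neutralise a possibly large negative eigenvalue (whose effect is oscillation rather than mixing), I would first replace $P$ by its lazy version $\widetilde P = (I+P)/2$: this preserves the stationary distribution, lowers the conductance by at most a factor of two, changes the mixing time by at most a constant factor, and pushes the spectrum into $[0,1]$. So we may assume $0 \le \lambda_2 < 1$ and that the relevant spectral gap is $1-\lambda_2$.

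The substantive step is the discrete Cheeger inequality $1-\lambda_2 \ge \Phi^2/2$. I would start from the Rayleigh quotient characterisation
$$1-\lambda_2 \;=\; \min_{f \,:\, \mE_\pi f = 0}\; \frac{\tfrac12 \sum_{i,j} \pi_i p_{ij}\bigl(f(i)-f(j)\bigr)^2}{\sum_i \pi_i f(i)^2},$$
pick an optimiser $f$, shift so that $\pi(\{f > 0\}) \le 1/2$, and replace $f$ by its positive part $f_+ = \max(f,0)$, which can only decrease the quotient. A co-area style argument then controls the numerator through the factorisation $|a^2-b^2| = (a+b)|a-b|$ and Cauchy--Schwarz, and shows that among the superlevel sets $S_t = \{f_+ \ge t\}$ at least one satisfies $\Phi(S_t)^2 \le 2(1-\lambda_2)$, yielding the claim.

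Given Cheeger, the second ingredient is $L^2 \to L^1$ conversion. Expanding $P^t(i,\cdot)$ in an orthonormal eigenbasis of $P$ in $L^2(\pi)$, one obtains for every initial state $i$
$$\bigl\|P^t(i,\cdot) - \pi\bigr\|_{\rm TV}^2 \;\le\; \tfrac14 \sum_j \frac{\bigl(P^t(i,j) - \pi_j\bigr)^2}{\pi_j} \;\le\; \frac{\lambda_2^{2t}}{4 \pi_*},$$
where the first inequality is Cauchy--Schwarz against $\pi$ and the second is the spectral contraction in $L^2(\pi^{-1})$. Requiring the right-hand side to fall below $\varepsilon^2$, using $-\log \lambda_2 \ge 1-\lambda_2$, and substituting $1-\lambda_2 \ge \Phi^2/2$ gives $t_{\rm mix}(P,\varepsilon) \le (2/\Phi^2)\log\bigl(1/(2\varepsilon\sqrt{\pi_*})\bigr)$, which is the asserted bound once the additive $\log(1/\varepsilon)$ term and the square root are absorbed into the $\varepsilon$-dependent constant $c$.

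The main obstacle is the Cheeger step: the laziness reduction, the spectral expansion, and the final arithmetic are routine consequences of reversibility and the spectral theorem, but extracting a good cut from an eigenfunction requires the careful pairing of the identity $|a^2-b^2|=(a+b)|a-b|$ with a Cauchy--Schwarz that couples the numerator and denominator of the Rayleigh quotient back to $\Phi$. Everything else is bookkeeping.
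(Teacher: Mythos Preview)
The paper does not prove this theorem; it is quoted as a known result of Jerrum and Sinclair \cite{sinclair_jerrum:rapidmixing1989} and used as a black box. Your sketch is precisely the standard route to that result (discrete Cheeger inequality plus $L^2$ spectral decay), so in spirit there is nothing to compare.

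There is, however, a genuine gap in your laziness reduction. You assert that passing from $P$ to $\widetilde P=(I+P)/2$ ``changes the mixing time by at most a constant factor,'' but this is false in the direction you need. If $P$ has an eigenvalue close to $-1$ (which aperiodicity alone does not rule out uniformly), then $t_{\rm mix}(P)$ can be arbitrarily large while $t_{\rm mix}(\widetilde P)$ stays bounded: take the two-state chain with $p_{12}=p_{21}=1-\epsilon$, where $\Phi\approx 2$, $t_{\rm mix}(\widetilde P)=O(1)$, but $t_{\rm mix}(P)\sim 1/\epsilon$. So a bound for $\widetilde P$ does not transfer back to $P$, and in fact this example shows the theorem as literally stated (with $c$ depending only on $\varepsilon$) cannot hold without some control on the smallest eigenvalue. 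The usual fixes are either to assume laziness from the start (as the paper does explicitly in the companion Theorem~\ref{THM:LOVSIMON}), or to state the spectral bound in terms of $1-\lambda_*$ with $\lambda_*=\max(\lambda_2,|\lambda_n|)$ and invoke Cheeger only for the $\lambda_2$ piece. Everything else in your outline---the Rayleigh quotient, the co-area/Cauchy--Schwarz extraction of a good level set, and the $L^2$-to-TV conversion---is correct and standard.
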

Let us also cite the following version of the above
theorem due to Lov\'asz and Simonovits
\cite{lovasz_simonovits:conductance1990}. This theorem does not
require reversibility, but it
assumes that the Markov chain is \emph{lazy}, i.e., $p_{ii}\ge 1/2$ for all
$i$:
\begin{theorem}
  \label{THM:LOVSIMON}
  There is a constant $c>0$ 
  such that for any aperiodic, irreducible, lazy
  Markov chain the following bound for the mixing
  time holds:
  $$t_{\rm mix} \le
  c\frac{1}{\Phi^2}\log\left(\frac{1}{\pi_*}\right).$$
  The constant $c$ depends only on $\varepsilon$, showing up in the
  definition of the mixing time.
\end{theorem}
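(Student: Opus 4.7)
The plan is to follow the level-set/concave-envelope technique of Lov\'asz--Simonovits, which tracks not the distribution $\sigma P^{k}$ itself but a one-dimensional summary of it. Fix a starting distribution $\sigma$, write $\mu_k=\sigma P^k$, and define, for $x\in\{0,1/n,\ldots,1\}$,
$$h_k(x)=\max_{S\subseteq\cX,\ \pi(S)=x}\mu_k(S),$$
extended to $[0,1]$ by piecewise linear interpolation. By a simple rearrangement argument, $h_k$ is concave, nondecreasing, and satisfies $h_k(0)=0$, $h_k(1)=1$, and $h_k(x)\ge x$ everywhere. Moreover $\|\mu_k-\pi\|_{\rm TV}=\max_{x}\,(h_k(x)-x)$, so bounding the mixing time reduces to controlling how fast the ``gap'' $h_k(x)-x$ shrinks as $k$ grows.

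The heart of the proof is a one-step contraction for $h_k$. I would establish that for every $x\in[0,1]$,
$$h_{k+1}(x)\le\tfrac{1}{2}\bigl(h_k(x-\delta(x))+h_k(x+\delta(x))\bigr),\qquad \delta(x)=2\Phi\min(x,1-x).$$
To prove this, take an optimizing set $S$ with $\pi(S)=x$, and write $\mu_{k+1}(S)=\sum_{j\in S}\sum_i\mu_k(i)p_{ij}$. Decompose the inner sum according to whether $i\in S$ or $i\notin S$, and use the laziness assumption $p_{ii}\ge 1/2$ together with the definition of conductance $Q(S,S^c)\ge\Phi\,\pi(S)\pi(S^c)$ to exhibit two subsets $S_-\subset S\subset S_+$ with $\pi(S_\pm)=x\mp\delta(x)$ such that $\mu_{k+1}(S)\le\tfrac12(\mu_k(S_+)+\mu_k(S_-))$; replacing the right-hand side by its envelope $h_k$ yields the claim. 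Laziness is what guarantees that at least half the mass is ``available'' to be split symmetrically across the cut, so that conductance actually produces contraction rather than an uncontrolled swap.

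Once the one-step inequality is in hand, the rest is routine analysis. Using concavity of $h_k$ and expanding around $x$, one obtains $h_{k+1}(x)-x\le (h_k(x)-x)-\tfrac{1}{2}\Phi^2 \min(x,1-x)\cdot(\text{second-difference term})$, from which an inductive comparison gives
$$h_k(x)-x\le\sqrt{\min(x,1-x)}\,\bigl(1-\tfrac{\Phi^2}{2}\bigr)^{k}.$$
Since $h_0(1/n)\le 1$ and $\mu_k(S)-\pi(S)\le h_k(\pi(S))-\pi(S)$ with $\pi_*=1/n$, taking the maximum over $x\ge 1/n$ yields
$$\|\mu_k-\pi\|_{\rm TV}\le\sqrt{n}\,e^{-k\Phi^2/2}.$$
Solving $\sqrt n\,e^{-k\Phi^2/2}\le\varepsilon$ gives $k=O(\Phi^{-2}\log(1/\pi_*))$, as required, and since $\sigma$ was arbitrary we may take the maximum in the definition of $t_{\rm mix}$. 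The main obstacle, and the only truly delicate step, is the one-step inequality: verifying it requires a careful combinatorial construction of the sets $S_\pm$ from the optimizer $S$ and a clean bookkeeping of how laziness and conductance combine. The envelope formalism and the final exponential iteration, by contrast, are essentially a transcription of concave-function convexity and induction.
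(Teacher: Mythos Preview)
The paper does not give its own proof of this theorem: it is stated as a citation of Lov\'asz and Simonovits \cite{lovasz_simonovits:conductance1990}, with no argument supplied. Your sketch is precisely the Lov\'asz--Simonovits concave-envelope method from that reference, so in that sense you and the paper are in agreement---you are reproducing the source the paper points to.

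As a sketch of the Lov\'asz--Simonovits argument your outline is sound, but one step is written inconsistently. The inductive bound should read
\[
h_k(x)-x \le C_0\,\sqrt{\min(x,1-x)}\,\Bigl(1-\tfrac{\Phi^2}{2}\Bigr)^{k},
\]
where $C_0$ is fixed by the requirement that the inequality hold at $k=0$; for a worst-case starting distribution (a point mass) one needs $C_0$ of order $1/\sqrt{\pi_*}$. You state the intermediate bound without $C_0$, which would give $\|\mu_k-\pi\|_{\rm TV}\le (1-\Phi^2/2)^k$ with no dependence on $\pi_*$---too strong---and then the $\sqrt{n}$ reappears in your final line without a clear source. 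The remark ``since $h_0(1/n)\le 1$'' is the right instinct (this is exactly where $C_0=1/\sqrt{\pi_*}$ comes from), but it belongs at the base case of the induction, not as a post-hoc correction. A second minor point: the optimizer $S$ for $h_{k+1}(x)$ need not have $\pi(S)$ landing exactly on a lattice point, and the sets $S_\pm$ in the Lov\'asz--Simonovits argument are really fractional (weighted) sets, handled via the piecewise-linear extension of $h_k$; your wording suggests genuine subsets, which is not quite how the argument runs. Neither issue is fatal, and the overall strategy is correct.
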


Based on these theorems we get an insight on the mixing properties of
the best Markov chains. Given a connectivity graph we can look for the
best reversible Markov chain having the lowest mixing time or even
relax the reversibility condition to get the fastest non-reversible one.
The possible gap between the mixing times of reversible and
non-reversible Markov chains is limited according to the following
proposition.
\begin{proposition}
\label{PRP:BESTREVNONREV}
For some fixed connectivity graph let $P$ and $\tilde{P}$ be the
doubly stochastic
transition matrices of the best reversible and non-reversible chains,
respectively, yielding the lowest mixing times. Then for the respective mixing times we have
$$t_{\rm mix}(P)\le  c t_{\rm mix}^2(\tilde{P})\log n.$$
\end{proposition}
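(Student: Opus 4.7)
The plan is to build an explicit reversible chain out of the optimal non-reversible chain $\tilde P$ whose conductance is comparable to that of $\tilde P$, and then to invoke the conductance-based bounds already available. Since $P$ is by assumption the \emph{best} reversible chain on the connectivity graph, any reversible competitor provides an upper bound on $t_{\rm mix}(P)$.

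Concretely, I would set
\[
P' = \tfrac{1}{2}(\tilde P + \tilde P^T).
\]
Three things need to be checked and they are all immediate. (i) $P'$ is doubly stochastic, since both $\tilde P$ and $\tilde P^T$ are. (ii) $P'$ is reversible with respect to the uniform distribution, because $p'_{ij}=p'_{ji}$. (iii) The connectivity graph of $P'$ coincides with that of $\tilde P$, since $p'_{ij}>0$ iff $\tilde p_{ij}>0$ or $\tilde p_{ji}>0$. To secure aperiodicity (and to enable a clean application of Theorem~\ref{THM:JSMIXTIME} even if $\tilde P$ is not lazy), I would pass to the lazy version $P'' = \tfrac{1}{2}(I+P')$, which remains reversible, doubly stochastic, and has the same connectivity graph.

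The key analytic step is the comparison of conductances. Because $\tilde P$ is doubly stochastic, stationarity of the uniform distribution gives $Q(S,S^c)=Q(S^c,S)$ for every $S$, where $Q$ is the flow associated with $\tilde P$. Consequently, for the flow $Q'$ associated with $P'$,
\[
Q'(S,S^c)=\tfrac{1}{2}\bigl(Q(S,S^c)+Q(S^c,S)\bigr)=Q(S,S^c),
\]
so $\Phi(P')=\Phi(\tilde P)$. The lazification halves the off-diagonal entries and therefore $\Phi(P'')=\tfrac{1}{2}\Phi(P')=\tfrac{1}{2}\Phi(\tilde P)$. Combining Proposition~\ref{PRP:CONDMIXTIME1} applied to $\tilde P$ (giving $\Phi(\tilde P)\ge c/t_{\rm mix}(\tilde P)$) with Theorem~\ref{THM:JSMIXTIME} applied to $P''$ yields
\[
t_{\rm mix}(P)\le t_{\rm mix}(P'')\le \frac{c\log n}{\Phi(P'')^{2}}\le c'\,t_{\rm mix}^{2}(\tilde P)\log n,
\]
which is the desired inequality.

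The only subtlety I anticipate is the bookkeeping around aperiodicity and the fact that, strictly speaking, $\tilde P$ and $P$ are required to be aperiodic and irreducible for the mixing times to be defined; the lazification step cleanly resolves the first issue, while irreducibility of $P'$ (hence of $P''$) is inherited from $\tilde P$ because the connectivity graph is preserved. The conductance comparison itself is essentially an algebraic identity once the doubly stochastic hypothesis is used, so no sharp inequality or clever coupling is needed; the argument's content is really the observation that symmetrising a doubly stochastic chain preserves its bottleneck capacity.
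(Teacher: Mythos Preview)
Your proof is correct and follows essentially the same approach as the paper: symmetrise $\tilde P$ to $P'=\tfrac12(\tilde P+\tilde P^T)$, observe that conductance is preserved, and chain Theorem~\ref{THM:JSMIXTIME} with Proposition~\ref{PRP:CONDMIXTIME1}, then use the optimality of $P$. The only difference is your additional lazification step to secure aperiodicity, which the paper omits; this is a minor technical refinement and does not change the substance of the argument.
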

\begin{proof}
Let us define $P'=(\tilde{P}+\tilde{P}^T)/2$. Knowing that the stationary distribution
is uniform it is easy to see that
$P'$ is the transition matrix of a reversible Markov chain with the
same connectivity graph, and the stationary distribution corresponding
to $P'$ is once again uniform.
Moreover, observe that $\Phi_{P'}(S)=\Phi_{\tilde{P}}(S)$ for any
$S\subset\cX$ thus $\Phi_{P'}=\Phi_{\tilde{P}}$.
Using Theorem \ref{THM:JSMIXTIME} and Proposition
\ref{PRP:CONDMIXTIME1} this implies
$$t_{\rm mix}(P')\le  c_1\frac{1}{\Phi_{P'}^2}\log n=
c_1\frac{1}{\Phi_{\tilde{P}}^2}\log n \le c_2 t_{\rm
  mix}^2(\tilde{P})\log n.$$
The matrix $P'$ might not be the best choice for a reversible
transition matrix, but substituting it with a better $P$ just further
decreases the left hand side.
\end{proof}

Clearly for a random graph we cannot completely exclude some pathological cases.
Therefore, we are interested in the typical behavior, and we look for properties that are true
\emph{asymptotically almost surely (a.a.s.)} as the size of the graph
goes to infinity.
We are interested in the order of the
mixing time as $n$ increases but we do not care about constant
factors. For that reason, we use $c$ or $c_i$ for constants whose
value is unimportant. They might represent different values in each expression.

\section{Graph models}
\label{SEC:RANDEDGEMODELS}

First we recall the graph models of previous works for reference. Then
we give the detailed definition of the graph models currently
investigated. As noted in the introduction, one of the starting points is the case
when the connectivity graph is a cycle with $n$ nodes. The mixing time of the symmetric random walk is of the order
of $n^2$. It is far more complicated to deal with the case when we
consider any Markov chain, including non-reversible ones. Still, the order of magnitude of the mixing
time does not decrease, as shown by the author
\cite{gb:ringmixing2011}:

\begin{theorem}
\label{THM:SLOWMIX}
Consider a Markov chain on a cycle with
$n$ nodes having a doubly stochastic transition matrix $P$.
Then, with some global constant $C>0$ we have
$$t_{{\rm mix}}(P,1/8)\ge Cn^2.$$
\end{theorem}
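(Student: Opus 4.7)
The plan is a Fourier / lifting argument. The test function is the character $\chi(x) = e^{2\pi i x/n}$: since $\pi(\chi) = 0$ and $\|\chi\|_\infty = 1$, duality gives $\|\delta_{x_0}P^k - \pi\|_{\mathrm{TV}} \ge \tfrac12|(P^k\chi)(x_0)|$, so the theorem reduces to exhibiting a starting state $x_0$ with $|(P^k\chi)(x_0)| > 1/4$ for every $k \le Cn^2$. Because transitions are nearest-neighbor, the cycle chain lifts to a walk on $\mZ$: each step carries a well-defined signed increment $\xi_j \in \{-1,0,+1\}$, and the cumulative displacement $\tilde X_k := \sum_{j=1}^{k} \xi_j$ satisfies $(P^k\chi)(x_0) = \chi(x_0)\,E_{x_0}[e^{i\omega \tilde X_k}]$ with $\omega = 2\pi/n$; the task becomes to lower-bound the characteristic function of $\tilde X_k$ at frequency $\omega$.

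I would then write $\xi_j = \gamma_{X_{j-1}} + \eta_j$, where $\gamma_x := p_{x,x+1}-p_{x,x-1}$ is the local drift and $\eta_j$ is a bounded martingale difference, giving $\tilde X_k = M_k + N_k$ with $\mathrm{Var}(N_k) \le k$ and $E[N_k^{4}] = O(k^{2})$ by Azuma and Burkholder--Davis--Gundy. Double stochasticity forces the net flow $a_x - b_{x+1} =: d$ to be independent of $x$; equivalently $\gamma_x - d = b_{x+1} - b_x$, so the centered drift becomes the discrete-gradient sum $M_k - kd = \sum_{j=1}^{k} (b_{X_{j-1}+1} - b_{X_{j-1}})$. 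Taylor expansion of the characteristic function around $kd$ then yields
$$
|E_{x_0}[e^{i\omega \tilde X_k}]| \;\ge\; 1 \,-\, \tfrac{\omega^{2}}{2}\,\mathrm{Var}_{x_0}(\tilde X_k - kd) \,-\, O\!\left(\omega^{4}\,E_{x_0}[(\tilde X_k - kd)^{4}]\right),
$$
which, for $k \le cn^2$ and $c$ sufficiently small, exceeds $1/4$ as soon as one has $\mathrm{Var}_{x_0}(\tilde X_k - kd) = O(k)$ and $E_{x_0}[(\tilde X_k - kd)^{4}] = O(k^{2})$.

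These two moment bounds are the main obstacle: the naive pointwise estimate $|M_k - kd| \le 2k$ only yields variance $O(k^{2})$ and fourth moment $O(k^{4})$, useless in our regime. The sharp bounds must come from the discrete-gradient shape of $M_k - kd$. The strategy is to reduce to a telescoping sum via a bounded solution of the Poisson equation $(I - P)\phi = b_{\cdot+1} - b_\cdot$ on $\mZ_n$: once such $\phi$ is found, $M_k - kd = \phi(X_0) - \phi(X_k) + R_k$ with $R_k$ a martingale inheriting Azuma / BDG-type variance and fourth-moment control. The delicate point is that $\phi$ must be bounded \emph{independently of $n$} --- the generic Poisson solution on a cycle loses a factor of $n^{2}$ from the inverse spectral gap --- and this is precisely where the combination of the discrete-derivative right-hand side and the constant-circulation constraint $d$ saves the day, preventing that $n^{2}$ loss and closing the argument.
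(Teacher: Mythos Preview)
First, note that this theorem is not proved in the present paper: it is quoted from the author's earlier work \cite{gb:ringmixing2011}, so there is no in-paper proof to compare against. Your plan must therefore stand on its own.

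The overall architecture --- test against $\chi(x)=e^{2\pi i x/n}$, lift to $\mZ$, and reduce to a second-moment bound $E_{x_0}[(\tilde X_k-kd)^2]=O(k)$ for $k\le cn^2$ --- is sound and is indeed the natural route. Your identification of the constant edge-flow $d=a_x-b_{x+1}$ from double stochasticity, and the identity $\gamma_x-d=b_{x+1}-b_x$, are correct and are the right structural facts to exploit.

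The genuine gap is the Poisson-equation step. Your claim that $(I-P)\phi=b_{\cdot+1}-b_\cdot$ admits a solution bounded independently of $n$ is false in general. Take $d=0$, so $a_x=b_{x+1}$; then the equation for $\psi(x):=\Delta\phi(x)+1$ reduces to $b_{x+1}\psi(x)=b_x\psi(x-1)$, forcing $b_{x+1}\psi(x)\equiv C$ and hence $\Delta\phi(x)=C/b_{x+1}-1$ with $C$ the harmonic mean of the $b_y$. Now set $b_{x_0}=n^{-2}$ at a single site and $b_y=\tfrac12$ elsewhere (this is doubly stochastic with all cycle edges present). One computes $C\asymp n^{-1}$ and $\Delta\phi(x_0-1)\asymp n$, so $\|\phi\|_\infty\asymp n$. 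The boundary term $\phi(X_0)-\phi(X_k)$ in your decomposition is then of order $n$, and after squaring and multiplying by $\omega^2=(2\pi/n)^2$ it contributes a constant of size roughly $8\pi^2$, destroying the lower bound on $|E[e^{i\omega\tilde X_k}]|$.

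The heuristic ``discrete-derivative right-hand side prevents the $n^2$ loss'' holds for the \emph{constant-coefficient} Laplacian, but here $I-P$ has variable coefficients $a_x,b_x$, and the gain is exactly undone when some $b_x$ is small. The theorem is still true in such examples (indeed the mixing time is much larger than $n^2$ there), and the moment bound $E[(\tilde X_k-kd)^2]=O(k)$ for $k\le cn^2$ appears to survive --- but it does \emph{not} follow from a uniformly bounded Poisson solution. You need a different mechanism for this step: either a pathwise/local-time argument controlling $\sum_j (b_{X_{j-1}+1}-b_{X_{j-1}})$ directly, or a more careful martingale bound that weights the large increments of $\phi$ by the small transition probabilities that produce them. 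As written, the plan stops precisely at the point that carries all the difficulty.
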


The other end of the spectrum is the case where approximately $cn$
random edges are added to the cycle (for some constant $c>0$). This
way we get a model of Small World Networks (SWN). Namely if we add an
Erd\ho s-R\'enyi random graph with edge density $c/n$
to the cycle we get the model of Newman et al. \cite{newman:swn2000}. This and other similar models
were built to model large real networks, see Watts, Strogatz
\cite{watts_strogatz:swn}, Bollob\'as, Chung \cite{bollobas1988diameter}.
There is an intensive research activity on SWNs, in particular
the mixing time of random walks on them has been widely
investigated, see Tahbaz-Salehi and Jadbabaie \cite{tsj:swn_mixing} or
Hovareshti, Baras and Gupta \cite{hovareshti2008average}.
The following result is due to Durrett \cite{durrett:rgd}, Addario-Berry
and Lei \cite{addarioberry:swnmixing2012}:

\begin{theorem}
Consider an $n$ node graph from the model of Newman et
al. \cite{newman:swn2000}. Then for the
symmetric random walk on this graph we have
$$c_1\log^2 n<t_{\rm mix}<c_2\log^2 n$$
asymptotically almost surely (a.a.s.) with some global constants $c_1,c_2>0$.
\end{theorem}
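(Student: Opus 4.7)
My plan is to prove the lower bound by exhibiting a long cycle-arc free of random edges, and the upper bound by exploiting the global expansion induced by the random edges.

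\textbf{Lower bound.} Call a cycle vertex \emph{bad} if it is incident to at least one random edge; each vertex is bad with probability $p=1-(1-c/n)^{n-1}\to 1-e^{-c}\in(0,1)$, and distant vertices are essentially independent. A standard first/second moment calculation on the longest run of \emph{good} vertices yields that a.a.s.\ there is an arc $A$ of $L=\alpha\log n$ consecutive good vertices for some $\alpha=\alpha(c)>0$. Starting the walk at the midpoint $x_0$ of $A$, up to the first exit time $\tau$ of $A$ the walk is indistinguishable from a simple symmetric random walk on the path $A$ viewed as a segment of $\mZ$. A CLT / martingale argument (or Theorem~\ref{THM:SLOWMIX} applied to the path sub-chain) gives $\mathbb{P}(\tau\le\epsilon L^2)\le 1/4$ for small enough $\epsilon$. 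Hence at $t=\epsilon L^2=c_1\log^2 n$ the walk lies in $A$ with probability $\ge 3/4$, while $\pi(A)=L/n\to 0$, so $\|\sigma_{x_0}P^t-\pi\|_{\rm TV}\ge 3/4-o(1)$, forcing $t_{\rm mix}\ge c_1\log^2 n$.

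\textbf{Upper bound.} I would take the canonical-paths / Poincar\'e-inequality route. The first ingredient is that a.a.s.\ the SWN has graph diameter $O(\log n)$; this is a variant of the classical cycle-plus-Erd\ho s--R\'enyi diameter bound and follows from a BFS expansion argument using that each vertex has on average $2+c$ neighbors. The second ingredient is a load-balanced path system: for every ordered pair $(u,v)$ choose a canonical path $\gamma_{uv}$ of length $O(\log n)$ such that every edge carries at most $O(n\log n)$ paths. This can be produced by first routing pairs in the auxiliary graph obtained by contracting each maximal anchor-free cycle arc to a super-vertex (anchor vertices being endpoints of random edges), which a.a.s.\ is a bounded-degree expander, and then pushing the routing down to the original graph. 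Sinclair's inequality then gives spectral gap $\gamma\ge c/\log n$, and Theorem~\ref{THM:JSMIXTIME} yields $t_{\rm mix}\le c\gamma^{-1}\log(1/\pi_*)=O(\log^2 n)$.

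\textbf{Main obstacle.} The delicate step is congestion control. A naive application of Cheeger's inequality via only the conductance $\Phi\ge c/\log n$ yields merely $t_{\rm mix}\le c\log^3 n$; saving the extra logarithmic factor requires genuine canonical-path balancing (or, as in Addario-Berry--Lei, a direct spectral analysis of the contracted graph). Handling the anchor-free arcs of length close to $\log n$---which must be crossed diffusively rather than shortcut by a random edge---is precisely what makes $\log^2 n$ the correct scale and is the heart of the argument.
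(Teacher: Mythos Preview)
The paper does not prove this theorem; it is merely quoted as a known result due to Durrett and to Addario-Berry and Lei, so there is no ``paper's own proof'' to compare your proposal against.

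That said, your outline is in line with what those sources do. Your lower bound is exactly the standard argument: a.a.s.\ there is a shortcut-free arc of length $\Theta(\log n)$, and a walk started at its midpoint needs $\Theta(\log^2 n)$ steps to exit, which blocks mixing. This matches the lower bound idea in Durrett's book and is also the mechanism the present paper reuses (see Proposition~\ref{PRP:M1REVLOW} and the proofs of Theorems~\ref{THM:M2HOMO}--\ref{THM:M3HOMO}) for its sparser models.

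For the upper bound you correctly flag the real issue: conductance alone yields only $O(\log^3 n)$ via Theorem~\ref{THM:JSMIXTIME}, and shaving the extra logarithm is the substantive work. Your proposed route---contract anchor-free arcs to get a bounded-degree expander, build low-congestion canonical paths there, then lift---is plausible, but the congestion bound you assert (each edge carrying $O(n\log n)$ paths) is precisely the nontrivial step and would need real justification; the long arcs of length $\Theta(\log n)$ can concentrate many canonical paths on their few cycle edges. Addario-Berry and Lei instead bound the relaxation time directly by a delicate coupling/spectral argument, and Durrett uses a multicommodity-flow argument tailored to the Bollob\'as--Chung variant. So your high-level plan is sound, but the sentence ``This can be produced by first routing \ldots'' hides essentially all of the difficulty.
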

This is a huge gain in speed compared to the mixing time of $n^2$ for the
cycle alone. Similar results have been recently shown when the random
edges are added to other base graphs by Krivelevich, Reichman and
Samotij \cite{krivelevich2013smoothed} based on the work of
Fountoulakis and Reed \cite{fountoulakis2007faster}.

Currently we investigate graph models where a sublinear number of
extra edges are added to the cycle. Let us add an interesting note for context.
Our initial goal was to decrease the mixing time by adding a few more
edges to the connectivity graph, starting from a cycle. It is far from trivial to choose the
edges that help the most. Therefore we performed numerical optimization to get
the best setting of the new edges, but none of the resulting graphs did show
any symmetry or structure, but looked random to the human eye.
This drove us to choose the edges randomly. This choice turned out
to be fruitful as we get consistently low mixing times with high probability.

We call the newly added edges \emph{long range edges} to distinguish
them from the original ones (which connect nodes that are ``close'').
Let the target edge density of the added long range edges be $2n^{-\alpha}$
for some parameter $\alpha\in(1,2)$. We therefore expect
$n^{2-\alpha}$ extra edges. We introduce three models to realize this concept:

\begin{definition}
\label{DEF:MODELS}
We use the following three random graph models with long range edge
density $2n^{-\alpha}$ for $\alpha\in(1,2)$.
\begin{itemize}
\item[{\it M1}:] We take the $2\left\lceil n^{2-\alpha}\right\rceil$ almost
equidistant nodes $\left\{[in^{\alpha-1}/2],~0\le i <
2\lceil n^{2-\alpha}\rceil\right\},$ and add edges corresponding to a random
matching on them.
\item[{\it M2}:] From all possible long range edges we draw a
subset of size $\left\lceil n^{2-\alpha}\right\rceil$ randomly, uniformly.
\item[{\it M3}:] For all possible long range edge we randomly
decide to include it or not. Each edge is included independently
with probability $2n^{-\alpha}$.
\end{itemize}
\end{definition}

In the models M2 and M3 we allow original edges of the cycle
to be chosen as long range edges to simplify our discussion.

The coming results depend on the asymptotic growth rate of the number
of long range
edges, but not on whether we have exactly $\left\lceil
  n^{2-\alpha}\right\rceil$ or $\left\lfloor
  n^{2-\alpha}\right\rfloor$ of them. In this spirit we omit integer
rounding operations from now on, this only introduces an
asymptotically vanishing multiplicative error, but relieves
unnecessary complexity from our formulas.

We mainly consider the case of \emph{homogeneous} chains which
have a simple transition probability structure.
\begin{definition}
  \label{DEF:HOMOCHAINS}
  Fix some $q_c,q_l,r,d$.
  The \emph{homogeneous} Markov chain is
  defined by setting all clockwise transition probabilities on the
  cycle to $q_c+r$, and the counterclockwise ones to $q_c-r$. Long
  range edges are used with probability $q_l/d$. Otherwise the Markov
  chain stays put.

  We get the \emph{reversible homogeneous} Markov chain by setting
  $r=0$.

  These constructions do not necessarily give a proper Markov
  chain, feasibility depending on the parameters is discussed below.
\end{definition}

A random graph may occasionally have
nodes with high degree. The role of $d$ is to prevent such
nodes from having extremely high outgoing transition probabilities. The
following theorem ensures that this way we define meaningful Markov chains.

\begin{theorem}
\label{THM:MCFEASIBLE}
For every $\alpha \in (1,2)$ there is a $d(\alpha)$ such that for all
3 classes of random graphs M1, M2, M3 there is no node
with more than $d(\alpha)$ long range edges a.a.s.\\
Consequently, assuming $2q_c+q_l\le 1$ and setting $d=d(\alpha)$, homogeneous chains will be
feasible Markov chains
a.a.s.
\end{theorem}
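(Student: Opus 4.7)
The plan splits into two pieces: (i) showing that in each of the three random graph models the maximum, over nodes, of the number of long range edges incident to that node is bounded by a constant $d(\alpha)$ a.a.s., and (ii) verifying that this bound is exactly what guarantees feasibility of the homogeneous chain. I would dispatch part (ii) first as a one-line check: at any node the outgoing probabilities on the two cycle neighbours total $(q_c+r)+(q_c-r)=2q_c$, while the long range edges contribute at most $\deg\cdot q_l/d\le q_l$ whenever $\deg\le d$. Hence the total outgoing mass is at most $2q_c+q_l\le 1$, with the residual absorbed into a self-loop; the sign constraint $q_c\ge r$ is a standing assumption on the parameters and ensures each individual cycle transition is a valid probability.

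For part (i) I would treat the three models in turn. Model M1 is trivial: by construction the long range edges form a matching on a prescribed set of vertices, so every node has at most one long range edge and $d(\alpha)=1$ suffices. For model M3 the number of long range edges at a fixed node is stochastically dominated by $\mathrm{Bin}(n,2n^{-\alpha})$, hence the probability of exceeding $d$ is at most $\binom{n}{d+1}(2n^{-\alpha})^{d+1}\le C\,n^{(d+1)(1-\alpha)}$. A union bound over the $n$ nodes gives $O(n^{1+(d+1)(1-\alpha)})$, which vanishes as soon as $(d+1)(\alpha-1)>1$, so any integer $d(\alpha)>\tfrac{1}{\alpha-1}-1$ works. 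Model M2 is handled by the same moment computation: the probability that a fixed set of $d+1$ edges incident to a given node lies entirely in a uniformly chosen $m$-subset with $m=\lceil n^{2-\alpha}\rceil$ is $O\bigl((m/\binom{n}{2})^{d+1}\bigr)=O(n^{-(d+1)\alpha})$, and the same double union bound produces the same asymptotics. Alternatively one could couple M2 with M3 conditioned on the edge count and directly quote the M3 estimate.

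The whole argument reduces to the standard Erd\ho s--R\'enyi sparse-regime maximum-degree computation, specialized to the subcritical range $\alpha\in(1,2)$ where the mean long range degree $\Theta(n^{1-\alpha})$ tends to zero. I do not anticipate a genuine obstacle; the only point to watch is that $d(\alpha)$ must be fixed uniformly in $n$ before the homogeneous chain is defined, which is guaranteed by selecting, for instance, $d(\alpha)=\lceil 1/(\alpha-1)\rceil+1$, a quantity depending on $\alpha$ alone.
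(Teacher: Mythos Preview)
Your argument is correct and follows the same overall skeleton as the paper (M1 trivial, tail bound plus union bound for M3, then reduce M2 to the M3 situation), but the individual steps are handled differently. For M3 the paper applies a Chernoff/moment-generating-function bound with $t=(\alpha-1)\log n$ and arrives at $d(\alpha)=2/(\alpha-1)$; your first-moment union bound $\binom{n}{d+1}(2n^{-\alpha})^{d+1}$ is more elementary and yields the slightly sharper threshold $d(\alpha)>1/(\alpha-1)-1$. For M2 the paper does not compute the hypergeometric tail directly but instead couples an M2 graph with a ``heavy'' M3 graph of doubled density $4n^{-\alpha}$, observing that with high probability one only removes edges to reach the target count, so the M3 degree bound is inherited; your direct computation of the inclusion probability $O((m/\binom{n}{2})^{d+1})$ avoids this detour entirely. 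Both routes are short; yours is the more self-contained of the two, while the paper's coupling argument has the minor advantage of making the M2/M3 comparison explicit at the level of random graphs rather than just probabilities.
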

\begin{proof}
For graphs from model M1 the statement is straightforward: every node
has 0 or 1 long range edge.

Let us now consider
a graph from model M3. Take a single node and denote
the number of its long range edges by $X$. Clearly it
follows a binomial distribution $Binom(n-1,2n^{-\alpha})$. To get
an upper bound on $P(X>d(\alpha))$ we use a Chernoff-type estimate
$$P(X>d(\alpha))=P\left(e^{tX}>e^{td(\alpha)}\right)\le
\frac{\mE(e^{tX})}{e^{td(\alpha)}},$$
with arbitrary $t>0$. The moment generating function of $X$ is
$$\mE(e^{tX})=\left(1+2n^{-\alpha}(e^t-1)\right)^{n-1}.$$
Let us choose $t=(\alpha-1)\log n$ to get the following:
$$\mE(e^{tX})=\left(1+2n^{-\alpha}(n^{\alpha-1}-1)\right)^{n-1}\ra e^2$$
as $n\ra\infty$.
Now let us fix $d(\alpha) = 2/(\alpha-1)$ and any $c>e^2$.
For large enough $n$ we get
$$P(X>d(\alpha)) \le \frac{c}{e^{(\alpha-1)\log n\cdot d(\alpha)}} = \frac{c}{n^2}.$$
The probability of the event that some node has more than $d(\alpha)$
long range edges can be bounded from above by a simple union bound
resulting in $c/n$ which tends to $0$. With this the claim is proven.

For graphs from the model M2 the
number of long range edges of a single node follows a
hypergeometric distribution: out of $n \choose 2$ possible long range edges
$n^{2-\alpha}$ are marked, we count the number of those within the
$n-1$ possible edges of the current node.
This is less convenient to estimate than the binomial distribution
before. We will use our previously obtained bounds for M3 graphs by
showing a special way of generating an M2 graph.

We start with a modified ``heavy'' M3 graph where the edge
probability is increased to $4n^{-\alpha}$. Let the total number of
long range edges obtained be $m$. Depending
on whether $m$ exceeds $n^{2-\alpha}$ or not, we either discard some edges
chosen uniformly from the selected ones, or add some edges
chosen uniformly from the unselected ones. This way we get the
prescribed number of edges and by symmetry arguments it follows that the final subset is chosen uniformly
from all subsets of size $n^{2-\alpha}$.

We know that there is a $\tilde{d}(\alpha)$ such that the initial ``heavy'' M3
graph has at most $\tilde{d}(\alpha)$ long range edges at every node
a.a.s. If we have to discard edges from this graph then this
property remains true. We might increase the degree of a node only in
the case when $m$ is small and we have to add
edges. The probability of this to happen is:
$$P\left(m<n^{2-\alpha}\right) < P\left(\left|m-4n^{-\alpha}\frac{n(n-1)}{2}\right|>\frac{1}{2}n^{2-\alpha}\right)
< c\frac{n^24n^{-\alpha}(1-4n^{-\alpha})}{n^{4-2\alpha}} < cn^{\alpha-2}.$$
The first inequality is based on the inclusion of the events. To
control the deviation of $m$ from its expected value we use Chebyshev's inequality.

In the end, the probability on the left hand side also
vanishes as $n\ra\infty$, consequently using the value
of $\tilde{d}(\alpha)$ we got for ``heavy'' M3 graphs the statement of the theorem holds true for M2 graphs.
\end{proof}

The same way as we could ensure the feasibility of the random homogeneous
chains we can provide laziness.
If we choose $q_c,q_l$ such that
$2q_c+q_l\le\frac 1 2$, Theorem \ref{THM:MCFEASIBLE} shows that
the remaining probability to stay put is at least $\frac 1 2$ at
every node a.a.s.

\section{Conductance estimates}
\label{SEC:RANDCOND}

The next step is bounding the conductances of the Markov chains.
First we
present a technical tool to simplify the
minimization occurring at the calculation of the conductance.

\begin{lemma}
Suppose that $\emptyset \neq S_1, S_2 \subset \cX$, $S_1\cap S_2=\emptyset$ and
there is no edge between them. Then we have
$$\Phi(S_1\cup S_2) > \min(\Phi(S_1),\Phi(S_2)).$$
\end{lemma}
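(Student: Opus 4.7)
The plan is to exploit the disconnection hypothesis to split the flow $Q(S_1\cup S_2,(S_1\cup S_2)^C)$ into contributions of $S_1$ and $S_2$ separately, and then to compare the resulting combination of $\pi$-weights with $\pi(S_1\cup S_2)\pi((S_1\cup S_2)^C)$ to extract a strict gain.

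First, let $S=S_1\cup S_2$ and $\phi=\min(\Phi(S_1),\Phi(S_2))$. Because the hypothesis rules out edges between $S_1$ and $S_2$, every transition from $S_i$ that leaves $S_i$ must in fact leave $S$, so
\[
Q(S,S^C)=Q(S_1,S_1^C)+Q(S_2,S_2^C).
\]
By the definition of $\Phi(S_i)$ we have $Q(S_i,S_i^C)\ge\phi\,\pi(S_i)\pi(S_i^C)$, so
\[
Q(S,S^C)\ge \phi\bigl(\pi(S_1)\pi(S_1^C)+\pi(S_2)\pi(S_2^C)\bigr).
\]

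The key algebraic step is the identity $\pi(S_1^C)=\pi(S_2)+\pi(S^C)$ (and symmetrically for $S_2^C$), which yields
\[
\pi(S_1)\pi(S_1^C)+\pi(S_2)\pi(S_2^C)=\bigl(\pi(S_1)+\pi(S_2)\bigr)\pi(S^C)+2\pi(S_1)\pi(S_2)=\pi(S)\pi(S^C)+2\pi(S_1)\pi(S_2).
\]
Since $S_1,S_2\neq\emptyset$ the extra term $2\pi(S_1)\pi(S_2)$ is strictly positive. Combining with the previous bound,
\[
Q(S,S^C)\ge\phi\bigl(\pi(S)\pi(S^C)+2\pi(S_1)\pi(S_2)\bigr)>\phi\,\pi(S)\pi(S^C),
\]
and dividing by $\pi(S)\pi(S^C)$ gives $\Phi(S)>\phi$, as required.

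There is no real obstacle here: once the flow is decomposed by using the no-edge assumption, the only content is the small identity $\pi(S_1)\pi(S_1^C)+\pi(S_2)\pi(S_2^C)=\pi(S)\pi(S^C)+2\pi(S_1)\pi(S_2)$, which is where the strict inequality (as opposed to $\ge$) comes from. One should double-check that the case $S^C=\emptyset$ cannot arise nontrivially (if $S^C=\emptyset$ then $\Phi(S)$ is not defined by the formula, but the statement is vacuous since $S\subsetneq\cX$ is required in the definition of $\Phi$); apart from this trivial caveat the argument is complete.
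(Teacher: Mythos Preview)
Your proof is correct and follows essentially the same approach as the paper: both use the no-edge hypothesis to split $Q(S,S^C)=Q(S_1,S_1^C)+Q(S_2,S_2^C)$ and then apply elementary inequalities. The paper organizes the arithmetic as a mediant bound on $\frac{Q(S_1,S_1^C)+Q(S_2,S_2^C)}{\pi(S_1)+\pi(S_2)}$ combined with the strict inequality $1/\pi(S^C)>1/\pi(S_i^C)$, whereas you extract strictness from the explicit identity producing the $2\pi(S_1)\pi(S_2)$ term; these are two presentations of the same computation (note your final strict step also uses $\phi>0$, which holds by the standing irreducibility assumption).
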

\begin{proof}
$$\Phi(S_1\cup S_2) = \frac{Q(S_1\cup S_2,(S_1\cup
  S_2)^C)}{\pi(S_1\cup S_2)\pi((S_1\cup S_2)^C)} =
\frac{Q(S_1,S_1^C)+Q(S_2,S_2^C)}{\pi(S_1)+\pi(S_2)} \cdot
\frac{1}{\pi((S_1\cup S_2)^C)}.$$
The first term is between $Q(S_1,S_1^C)/\pi(S_1)$ and
$Q(S_2,S_2^C)/\pi(S_2)$. The second term is strictly
greater than both $1/\pi(S_1^C)$ and $1/\pi(S_2^C)$, thus the
lemma follows.
\end{proof}
We immediately get the following property for the minimizing set.
\begin{proposition}
\label{PRP:CONNCOND}
The set $S$ minimizing $\Phi(S)$
must be connected.
\end{proposition}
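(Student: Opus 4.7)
The plan is to argue by contradiction directly from the preceding lemma. Suppose $S$ is a minimizer of $\Phi(\cdot)$ but is disconnected in the connectivity graph. Then I can decompose $S$ into two nonempty pieces with no edges between them, namely $S_1$ equal to one connected component of $S$ (viewed as an induced subgraph of the connectivity graph) and $S_2 = S \setminus S_1$. Since $S_1$ is a full connected component inside $S$, no edge of the connectivity graph links $S_1$ to $S_2$.

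Both $S_1$ and $S_2$ are nonempty, and since $S \subsetneq \cX$ each is also a proper subset of $\cX$, so $\Phi(S_1)$ and $\Phi(S_2)$ are both well-defined. Applying the previous lemma to this decomposition gives
$$\Phi(S) = \Phi(S_1 \cup S_2) > \min(\Phi(S_1),\Phi(S_2)),$$
which contradicts the assumption that $S$ attains the minimum of $\Phi$. Hence the minimizing set must be connected.

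There isn't really a difficult step here — the content is packed into the lemma that was just proved, and the proposition is essentially an immediate corollary. The only small thing to be careful about is the choice of splitting: one needs the two pieces to have \emph{no} edge between them, which is why I take $S_1$ to be an entire connected component of $S$ rather than an arbitrary nonempty proper subset.
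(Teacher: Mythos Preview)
Your proof is correct and follows exactly the same approach as the paper: take a connected component $S_1$ of a disconnected $S$, set $S_2 = S\setminus S_1$, and apply the preceding lemma to obtain a strict inequality contradicting minimality. The paper's version is just terser.
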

\begin{proof}
Let $S\subset \cX$ be a disconnected set, $S_1$ one of it's connected
components. We may use the previous lemma with $S_1$ and $S_2 = S\setminus
S_1$ to obtain that $S$ is not minimizing $\Phi(S)$.
\end{proof}

Let us now present three theorems to
determine the exact order of magnitude of the conductance for all three models.

\begin{theorem}
\label{THM:M1COND}
Consider a graph from model M1. The conductance of the homogeneous
chain on this graph satisfies the following inequality a.a.s.:
$$c_1d(\alpha)^{-1}n^{1-\alpha}<\Phi<c_2n^{1-\alpha}.$$
\end{theorem}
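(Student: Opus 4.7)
The plan is to use Proposition~\ref{PRP:CONNCOND} to restrict the minimization defining $\Phi$ to arcs of the cycle, and then to work with the explicit flow formula for the homogeneous chain. For an arc $S$ of length $\ell$, the two boundary cycle edges contribute $2q_c/n$ to $Q(S,S^C)$ (the drift $r$ cancels), and each of the $k$ long range edges crossing the boundary contributes $q_l/(nd)$; since $\pi$ is uniform,
$$\Phi(S)=\frac{2q_c/n+k\,q_l/(nd)}{\pi(S)(1-\pi(S))}.$$

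For the upper bound, the special matching-nodes sit at nearly equidistant positions with spacing $\approx n^{\alpha-1}/2$, so one can deterministically choose an arc $S^*$ of length just under $n^{\alpha-1}/2$ that misses every special node. Then $k=0$ and $\pi(S^*)(1-\pi(S^*))=\Theta(n^{\alpha-2})$, giving $\Phi\le\Phi(S^*)=O(n^{1-\alpha})$ without any randomness.

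For the lower bound, fix any arc $S$ of length $\ell\le n/2$ (by symmetry) and let $X$ denote the number of special nodes it contains. Equidistance forces $\ell\le (X+1)n^{\alpha-1}/2+O(1)$, so from $\pi(S)(1-\pi(S))\le\ell/n$ one gets
$$\Phi(S)\ge \frac{2q_c}{\ell}+\frac{k\,q_l}{d\,\ell}\ge \frac{4q_c+2k\,q_l/d}{(X+1)n^{\alpha-1}}.$$
When $X$ is bounded by a sufficiently large absolute constant $C$, the cycle term alone already yields $\Phi(S)=\Omega(n^{1-\alpha})$. When $X\ge C$, the task reduces to showing $k\ge X/3$ a.a.s.\ uniformly over all arcs: since $X\le N/2$ for $N=2\lceil n^{2-\alpha}\rceil$, the expected number $E_{in}$ of matched pairs lying entirely inside $S$ satisfies $\mathbb{E}[E_{in}]=X(X-1)/(2(N-1))\le X/4$, so $\mathbb{E}[k]\ge X/2$ and only a concentration bound is missing.

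The main obstacle is to make the concentration for $E_{in}$ uniform over all $O(n^2)$ arcs. For $X\ge C\log n$ I would apply a bounded-differences inequality to the permutation representation of the matching (where swapping two positions changes $E_{in}$ by $O(1)$), obtaining $P(E_{in}\ge X/3)\le e^{-cX}\le n^{-3}$, enough for the union bound. For smaller $X\ge C$ I would use the direct combinatorial tail $P(E_{in}\ge m)\le (cX^2/N)^m/m!$ obtained by enumerating internal $m$-matchings, which is already super-polynomially small once $C$ is chosen large enough. The delicate piece is the intermediate regime, where a careful accounting of the number of arcs realising each value of $X$ is needed to avoid logarithmic losses in the final bound. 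Once this is in hand, the case analysis above yields $\Phi\ge c_1 d(\alpha)^{-1}n^{1-\alpha}$ a.a.s.
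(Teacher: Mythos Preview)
Your upper bound is fine and matches the paper's argument. The lower bound, however, has a genuine gap at the very first step.

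You invoke Proposition~\ref{PRP:CONNCOND} to ``restrict the minimization defining $\Phi$ to arcs of the cycle''. But Proposition~\ref{PRP:CONNCOND} only says the minimizing $S$ is connected \emph{in the connectivity graph}, which here is the cycle \emph{together with the matching edges}. A connected set in this graph need not be a single arc: it can be a union of several disjoint arcs joined by long-range edges. Your entire lower-bound analysis (the formula $\Phi(S)=(2q_c/n+kq_l/(nd))/(\pi(S)(1-\pi(S)))$, the relation $\ell\le (X+1)n^{\alpha-1}/2$, the concentration for $E_{in}$) is written for single arcs only and never touches these multi-arc sets.

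This is not a cosmetic omission. If $S$ is a union of $m$ arcs linked by long-range edges, the cycle boundary contributes $2mq_c/n$, while the long-range edges that provide the connectivity of $S$ are \emph{internal} and do not contribute to $Q(S,S^C)$. Bounding $\Phi(S)$ from below then requires showing that not too many of the matching edges incident to $S$ can stay inside $S$ --- i.e.\ an edge-expansion statement for the random matching on $2\lceil n^{2-\alpha}\rceil$ points. That expansion result is exactly the substance of the lower bound, and it does not follow from (nor is it implied by) your single-arc concentration argument for $E_{in}$.

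The paper avoids this difficulty by a different route: it collapses each empty arc to a single edge, obtaining a cycle on $2n^{2-\alpha}$ nodes plus a random perfect matching --- the Bollob\'as--Chung small-world graph --- and compares $\Phi(S)$ with $\tilde{\Phi}(\tilde{S})$ for the reduced chain. The required expansion is then imported wholesale from the known fact that the conductance of the Bollob\'as--Chung model is bounded below by a positive constant. To repair your approach you would need to prove that expansion directly, uniformly over all connected $S$ (not just arcs), which is essentially re-deriving the Bollob\'as--Chung conductance bound from scratch.
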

\begin{proof}
The upper bound is simple:
Let $A$ be one of the $n^{\alpha-1}/2$ long arcs without a
long range edge. We can use $\Phi(A)$ to bound the conductance:
$$\Phi=\min_{\emptyset\ne S\subsetneq V}\Phi(S) \le \Phi(A)
=
\frac{Q(A,A^C)}{\pi(A)\pi(A^C)} \le
\frac{2n^{-1}}{n^{\alpha-2}/2\cdot 1/2}=cn^{1-\alpha}.$$

The lower bound is a bit more intricate. Using Proposition
\ref{PRP:CONNCOND} we have to minimize over connected subsets to find
$\Phi$. Connected subgraphs are composed of
a collection of arcs which are connected by long range edges. Let us
define a new chain with nodes $\tilde{\cX}$ as shown in
Figure \ref{FIG:GRAPHRED}. For every node of $\cX$ with a long range
edge there is one node in $\tilde{\cX}$. Two nodes of $\tilde{\cX}$ are connected if they are connected
in $\cX$ or if they follow each other on the cycle. In other words, we
reduce all long empty arcs to single edges. Clearly the new chain has
$2n^{2-\alpha}$ nodes. We use the same homogeneous transition
probabilities as before. 

\begin{figure}[ht]
\centering
\includegraphics[width=0.7\columnwidth]{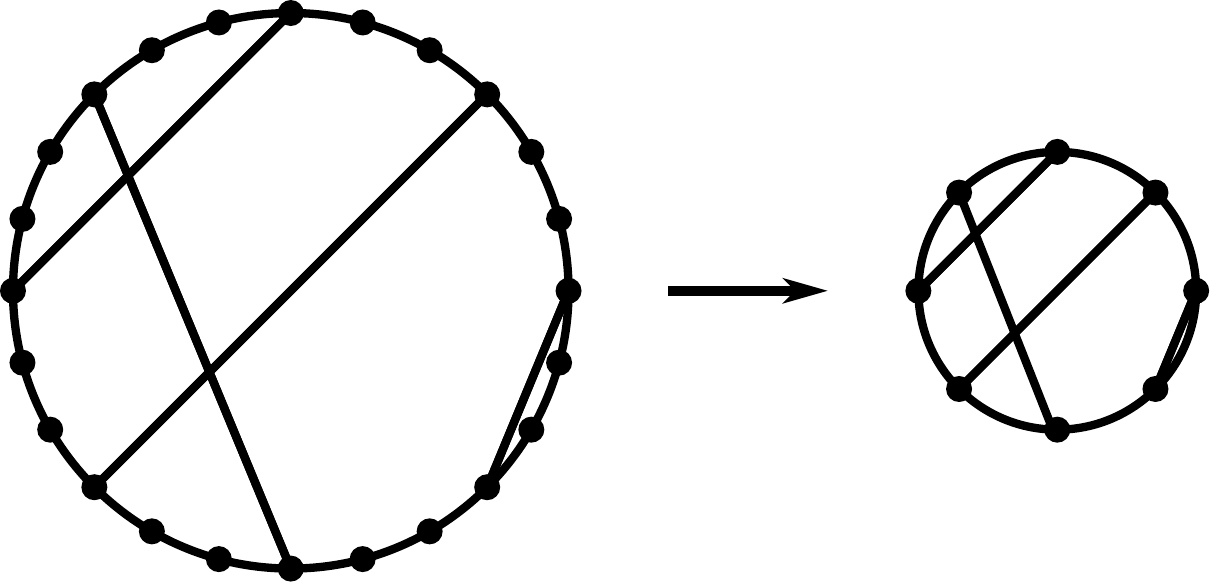}
\caption{Reducing M1 graphs}\label{FIG:GRAPHRED}
\end{figure}

We want to compare the conductance $\Phi$ of the original chain with the
conductance $\tilde{\Phi}$ of the new one. For any connected $S\subset
\cX$ we may naturally define
$\tilde{S}\subset\tilde{\cX}$ by keeping only the nodes in
$\tilde{\cX}$. When we want to bound $\Phi(S)$, we may freely swap $S$ with
$S^C$ as $\Phi(S)=\Phi(S^C)$. If $|\tilde{S}|>|\tilde{\cX}|/2$, let us
swap $S$ for $S^C$ (and pick one of its connected components if
needed). This way we can ensure $|\tilde{S}|\le|\tilde{\cX}|/2$. We
need to estimate the expressions appearing in $\Phi(S)$. The
transition probabilities are still $q_c\pm r$ along the cycle and
$q_l/d(\alpha)$ along long range edges. The stationary distribution is
uniform in both cases, but the number of nodes changes, so the
probability of individual points is scaled up by
$2n^{2-\alpha}/n$. Observe also that a boundary edge of $\tilde{S}$
corresponds to a boundary edge of $S$ with the same transition probability.
Therefore we get
$$Q(S,S^C) \ge \frac{2n^{2-\alpha}}{n}Q(\tilde{S},\tilde{S}^C).$$
For any node in $\tilde{S}$ there are at most the two adjacent empty
arcs present in $S$, consequently
$$\pi(S) < 2\tilde{\pi}(\tilde{S}).$$
For the complement set, we
made sure $\tilde{S}$ is
``small'' before so we have
$$\pi(S^C) < 1 \le 2\tilde{\pi}(\tilde{S}^C).$$
Combining these inequalities we arrive at
\begin{align}
\Phi(S)&\ge c n^{1-\alpha} \tilde{\Phi}(\tilde{S}),\nonumber \\
\Phi &\ge c n^{1-\alpha} \tilde{\Phi}.\label{EQ:M1PHIRED}
\end{align}

The reduced graph is a cycle with $2n^{2-\alpha}$ nodes with a random
matching  added, which is exactly the Bollob\'as-Chung small world
model \cite{bollobas1988diameter}. The conductance of the symmetric
random walk on the 
Bollob\'as-Chung model is already known, see e.g.\ Durrett \cite{durrett:rgd} p.\ 163-164., where it is shown that
it is bounded below by a positive constant. Our reduced chain is slightly
different as the long range edges have transition probabilities
$q_l/d(\alpha)$ instead of a global constant. The conductance scales
with the transition probabilities, hence for our reduced chain we
have
\begin{equation}
\label{EQ:TILDEPHIBOUND}
\tilde{\Phi}\ge c d(\alpha)^{-1}.
\end{equation}
Using this bound together with Equation \ref{EQ:M1PHIRED} completes the proof.
\end{proof}

\begin{theorem}
\label{THM:M2COND}
Consider a graph from model M2. The conductance of the homogeneous
chain on this graph satisfies the following inequality a.a.s.:
$$c_1d(\alpha)^{-1}\frac{n^{1-\alpha}}{\log n}<\Phi<c_2\frac{n^{1-\alpha}}{\log n}.$$
\end{theorem}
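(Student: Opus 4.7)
The plan is to adapt the two-sided argument from Theorem \ref{THM:M1COND}, with the key new ingredient being control over the maximum gap between consecutive endpoints of long range edges along the cycle. In M2, unlike M1, these endpoints are essentially uniformly distributed, so their largest spacing grows like $n^{\alpha-1}\log n$ (rather than $n^{\alpha-1}$), which is precisely the source of the extra $\log n$ factor in both bounds.

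For the upper bound, I would first establish that a.a.s.\ there exists an arc $A$ of the cycle containing at least $c n^{\alpha-1}\log n$ consecutive nodes, none of which is an endpoint of a long range edge. This follows from a standard extreme-value / first-moment computation on the spacings of $2n^{2-\alpha}$ points drawn uniformly on the cycle (roughly: the probability that a fixed arc of length $k$ contains no endpoint is $(1-2kn^{-\alpha})^{n^{2-\alpha}} \approx e^{-2kn^{1-\alpha}}$, which is $\Theta(1/n)$ for $k\sim \frac{\alpha-1}{2}n^{\alpha-1}\log n$; a union / second-moment bound yields existence a.a.s.). Taking $S=A$ gives $Q(A,A^C)\le c/n$ and $\pi(A)\pi(A^C)\ge c n^{\alpha-2}\log n$, so $\Phi(A)\le c n^{1-\alpha}/\log n$.

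For the lower bound, I would repeat the reduction of Theorem \ref{THM:M1COND}: collapse every empty arc between consecutive long range endpoints to a single edge, producing a chain on $\tilde{\cX}$ with $\tilde{n}\le 2n^{2-\alpha}$ nodes, and compare $\Phi(S)$ to $\tilde{\Phi}(\tilde{S})$ for any connected $S$ (using Proposition \ref{PRP:CONNCOND}). The estimates $Q(S,S^C)\ge (\tilde{n}/n)Q(\tilde{S},\tilde{S}^C)$ and $\pi(S^C)\le 2\tilde{\pi}(\tilde{S}^C)$ (after swapping $S\leftrightarrow S^C$ so that $|\tilde S|\le \tilde n/2$) go through verbatim. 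The one estimate that degrades is $\pi(S)$ vs $\tilde\pi(\tilde S)$: a node of $\tilde{\cX}$ can now bring in an arc of length up to $c n^{\alpha-1}\log n$, so $|S|\le c|\tilde{S}|n^{\alpha-1}\log n$ and hence $\pi(S)\le c(\log n)\tilde{\pi}(\tilde{S})$. Here I would invoke the complementary spacing statement: a.a.s.\ \emph{every} arc between consecutive endpoints has length at most $c n^{\alpha-1}\log n$, which again follows from a first-moment/union bound on spacings of uniform order statistics. Combining the three inequalities yields $\Phi\ge c n^{1-\alpha}\tilde{\Phi}/\log n$.

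It remains to bound $\tilde\Phi$ from below by $c d(\alpha)^{-1}$. The reduced graph is a cycle on $\Theta(n^{2-\alpha})$ nodes with $n^{2-\alpha}$ additional random edges whose endpoints come in pairs chosen uniformly; by Theorem \ref{THM:MCFEASIBLE} each node carries at most $d(\alpha)$ such edges, so this is essentially a Bollob\'as--Chung small world model with bounded vertex multiplicities, and the constant-order conductance bound cited in the proof of Theorem \ref{THM:M1COND} (Durrett \cite{durrett:rgd}) applies after scaling the transition weight of each long range edge by $1/d(\alpha)$. This gives \eqref{EQ:TILDEPHIBOUND} in the present setting and combines with the previous display to yield $\Phi\ge c d(\alpha)^{-1} n^{1-\alpha}/\log n$. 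The main obstacle is the simultaneous control of the maximum spacing in both directions (existence of one long empty arc for the upper bound, and absence of very long ones for the lower bound); the slight departure of the reduced graph from a true uniform matching is a minor cosmetic issue absorbed into the $d(\alpha)^{-1}$ factor.
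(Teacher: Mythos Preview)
Your proposal is correct and follows essentially the same route as the paper: both use the maximal-spacing result for $\sim 2n^{2-\alpha}$ uniform points on the cycle (the paper cites Slud \cite{slud1978entropy} and Devroye \cite{devroye1981laws} rather than sketching a moment argument) to get the upper bound and to control $\pi(S)$ versus $\tilde\pi(\tilde S)$ in the lower-bound reduction. The one point the paper makes more explicit is how to handle nodes carrying several long range edges: instead of leaving the reduced graph as ``Bollob\'as--Chung with bounded multiplicities,'' it splits each such node into multiple copies so that the long range edges form an honest random matching on $\tilde\cX$, making \eqref{EQ:TILDEPHIBOUND} apply directly; you flag this as a cosmetic issue, and indeed it is, but the copy trick is the clean way to dispose of it.
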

\begin{proof}
To establish an upper bound, we search again for a long arc $A$ without a long
range edge. In this context, adding $n^{2-\alpha}$ random edges means
we cut the cycle into arcs at $k = 2n^{2-\alpha}$ random points.
Asymptotically this is equivalent to splitting the unit interval by $k-1$
i.i.d.\ uniform variables (in terms of the resulting lengths). For the
length $l$ of the largest gap it is known that
$$c_1\log (k-1)/(k-1)< l < c_2\log (k-1)/(k-1)$$
a.a.s. See Slud
\cite{slud1978entropy} or Devroye \cite{devroye1981laws} for details.
Therefore the number of nodes in the
longest empty arc $A$ is a.a.s.\ at least
$$nl \ge cn\frac{\log k}{k} = cn\frac{(2-\alpha)\log n +\log 2}{2n^{2-\alpha}} =
cn^{\alpha-1}\log n + O(n^{\alpha-1}).$$
Consequently we can use a similar estimate as before:
$$\Phi
\le
\frac{Q(A,A^C)}{\pi(A)\pi(A^C)} \le
\frac{2n^{-1}}{cn^{\alpha-2}\log n\cdot 1/2}=c\frac{n^{1-\alpha}}{\log n}.$$

For the proof of the lower bound we intend to follow the same idea as
for Theorem \ref{THM:M1COND}, but a few things have to be updated. First of all, there might be nodes which have multiple long
range edges. For the graph on $\tilde{\cX}$ we want the long range
edges to form a random matching. Thus we include multiple copies of such
a node and randomly distribute the long range edges among them, see Figure \ref{FIG:GRAPHREDM2}.

\begin{figure}[ht]
\centering
\includegraphics[width=0.7\columnwidth]{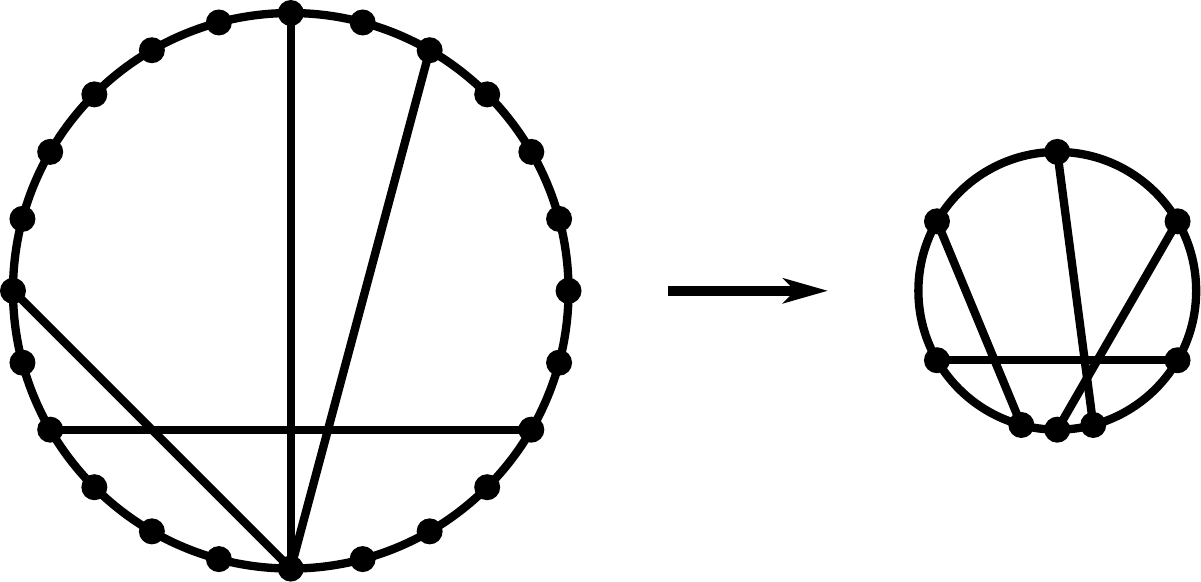}
\caption{Reducing M2 graphs}\label{FIG:GRAPHREDM2}
\end{figure}

We use similar inequalities to those in the proof of Theorem
\ref{THM:M1COND}. There are still $n^{2-\alpha}$ long range edges,
thus the reduced graph has $2n^{2-\alpha}$ nodes again. Once again, we
use that the stationary probability of individual points scale up by
$2n^{2-\alpha}/n$. Also, a boundary edge of $S$ becomes at most one
boundary edge of $\tilde{S}$ of the same type, and thus with the same
transition probability. We thus arrive at
$$
Q(S,S^C) > 2n^{1-\alpha}Q(\tilde{S},\tilde{S}^C).
$$
This time, the collapsed arcs are not necessarily of the same
length. Still, we can use $cn^{\alpha-1}\log n$ as an upper bound as
we have shown before. This results in a weakened version of the second inequality:
$$
\pi(S) < c\log n \tilde{\pi}(\tilde{S}).
$$
For the third inequality we
use the same trick as before, swapping $S$ with $S^C$ if necessary to ensure
$\tilde{S}^C$ is large. We get again
$$
\pi(S^C) < 2\tilde{\pi}(\tilde{S}^C).
$$
Joining these inequalities yields
$$
\Phi_S \ge 8c \frac{n^{1-\alpha}}{\log n} \tilde{\Phi}_{\tilde{S}}.
$$
To get the conductance we have to optimize over $S$:
$$
\Phi = \min_S \Phi_S \ge \min_S 8c \frac{n^{1-\alpha}}{\log n}
\tilde{\Phi}_{\tilde{S}} \ge \min_{\tilde{S}} 8c \frac{n^{1-\alpha}}{\log n}
\tilde{\Phi}_{\tilde{S}} = 8c \frac{n^{1-\alpha}}{\log n} \tilde{\Phi}.
$$
Let us point out the subtle detail that we might not encounter all
possible $\tilde{S}$ as a contraction of some $S$. But when we
increase the set on which we minimize by including all $\tilde{S}$,
the minimum can only decrease, this confirms the inequality.
We use Equation \ref{EQ:TILDEPHIBOUND} again for $\tilde{\Phi}$ to conclude the proof.
\end{proof}

\begin{theorem}
\label{THM:M3COND}
Consider a graph from model M3. The conductance of the homogeneous
chain on this graph satisfies the following inequality a.a.s.:
$$c_1d(\alpha)^{-1}\frac{n^{1-\alpha}}{\log
  n}<\Phi<c_2\frac{n^{1-\alpha}}{\log n}.$$
\end{theorem}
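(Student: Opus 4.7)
The plan is to bootstrap directly from Theorem \ref{THM:M2COND}. The key observation is that if we condition on the total number $m$ of long-range edges in an M3 graph, then the conditional distribution is exactly the M2 distribution with $m$ edges in place of $\lceil n^{2-\alpha}\rceil$ (each $m$-subset of potential edges is equally likely).

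First I would show that $m$ is concentrated around its expectation. Since $m\sim \mathrm{Binom}\!\left(\binom{n}{2},2n^{-\alpha}\right)$ has mean $(1+o(1))n^{2-\alpha}$ and variance $O(n^{2-\alpha})$, Chebyshev's inequality (the same tool already used in the proof of Theorem \ref{THM:MCFEASIBLE}) gives
$$P\!\left(\tfrac{1}{2}n^{2-\alpha}\le m\le \tfrac{3}{2}n^{2-\alpha}\right)\to 1$$
as $n\to\infty$. On this high-probability event I may carry out all subsequent estimates pretending $m$ equals any fixed $k=\Theta(n^{2-\alpha})$.

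Next I would observe that the proof of Theorem \ref{THM:M2COND} never actually uses the precise value $\lceil n^{2-\alpha}\rceil$; it only uses that the number of long-range edges is of order $n^{2-\alpha}$. Specifically, the Slud--Devroye maximal spacing estimate still yields a longest empty arc of length $\Theta(n^{\alpha-1}\log n)$ when the number of random cuts is $\Theta(n^{2-\alpha})$; the reduction to a small-world chain still produces $\tilde{\mathcal{X}}$ with $\Theta(n^{2-\alpha})$ nodes whose long-range edges form a random matching (after splitting any multi-edge node into up to $d(\alpha)$ copies, which is legitimate by Theorem \ref{THM:MCFEASIBLE}); and the three scaling inequalities
$$Q(S,S^C)\ge c\,n^{1-\alpha}\,Q(\tilde{S},\tilde{S}^C),\quad \pi(S)<c\log n\cdot\tilde{\pi}(\tilde{S}),\quad \pi(S^C)<2\,\tilde{\pi}(\tilde{S}^C)$$
all carry over verbatim. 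Equation \ref{EQ:TILDEPHIBOUND} then gives $\tilde{\Phi}\ge c\,d(\alpha)^{-1}$, and an identical upper-bound argument using the long empty arc $A$ gives $\Phi\le c\,n^{1-\alpha}/\log n$.

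Combining these two ingredients by the tower property (conditioning on $m$ and invoking the M2 conductance bounds with $m$ in place of $\lceil n^{2-\alpha}\rceil$) yields a.a.s.
$$c_1 d(\alpha)^{-1}\frac{n^{1-\alpha}}{\log n}<\Phi<c_2\frac{n^{1-\alpha}}{\log n},$$
which is the claim. I do not expect a genuinely new obstacle compared with M2; the only point requiring care is checking that the maximal-spacing and small-world conductance estimates degrade continuously in $m$ across the whole range $m=\Theta(n^{2-\alpha})$, so that the a.a.s.\ convergence survives the conditioning.
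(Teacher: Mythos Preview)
Your approach is correct and genuinely different from the paper's.  The paper does \emph{not} reduce M3 to M2 by conditioning on the edge count; instead it gives a self-contained direct argument that exploits the edge-independence of M3.  For the lower bound it parametrizes connected subsets $S$ by their size $s$ and the number $l$ of cycle-arcs they comprise, bounds the number of such sets by $\exp(4l\log n)$, and applies Chernoff's inequality to the $\mathrm{Binom}(s(n-s),2n^{-\alpha})$ count of outgoing long-range edges; a careful choice of the constant $\underline{c}$ makes the union bound over all $(s,l)$ summable to $O(n^{-1})$.  For the upper bound it partitions the cycle into blocks of length $\bar{c}\,n^{\alpha-1}\log n$ and observes that the long-range connections between blocks form an Erd\H os--R\'enyi graph, so the classical isolated-vertex threshold yields a block with no outgoing long-range edge.

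Your conditioning route is conceptually cleaner and avoids any new computation, but it inherits the Slud--Devroye spacing estimate and the Bollob\'as--Chung conductance bound as black boxes and must check that both hold uniformly over the whole window $m\in[\tfrac12 n^{2-\alpha},\tfrac32 n^{2-\alpha}]$; you correctly flag this, and it is indeed routine since both auxiliary results come with failure probabilities that decay explicitly in $m$.  The paper's direct proof, by contrast, uses the product structure of M3 in an essential way---both the Chernoff step and the Erd\H os--R\'enyi reduction rely on edge independence and would not transfer back to M2---but in return it is fully self-contained and delivers explicit constants without any appeal to uniformity.
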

\begin{proof}
For this proof we use a more direct approach, partially based on ideas
from Durrett \cite{durrett:rgd}.
Let us start with the lower bound.
For any $S\subset \cX$, $|S|\le n/2$ we have
\begin{equation}
\label{EQ:M3INITIALSTEP}
\Phi(S) = \frac{Q(S,S^C)}{\pi(S)\pi(S^C)}\ge \frac{Q(S,S^C)}{\pi(S)}\ge\frac{cd(\alpha)^{-1}\frac{|\partial
  S|}{n}}{\frac{|S|}{n}}=cd(\alpha)^{-1}\frac{|\partial S|}{|S|},
\end{equation}
where $\partial S$ is the set of edges between $S$ and $S^C$.
We have to ensure this is large enough for all possible
subsets $S$.
Let us fix $s=|S|\le n/2$ and the number of disjoint intervals $l$ it
consists of. We focus at only these subsets at once.

We can estimate the number $k$ of possible subsets in the following way:
$$k\le
\binom{n}{l}\binom{s-1}{l-1}<
\binom{n}{l}\binom{s}{l}
.$$
The first binomial coefficient counts how we can choose the starting
points of the intervals, the second distributes the total length of
$s$ among them. To continue, we use the following inequality:
$$\binom{m}{t}\le\left(\frac{me}{t}\right)^t.$$
For $k$ this gives us
$$k \le
\left(\frac{ne}{l}\right)^{l} \left(\frac{se}{l} \right)^{l} \le \exp
\left(l\left(\log\frac n l + \log\frac s l +2\right)
\right)<\exp(4l\log n) .$$

The outgoing edges from $S$ are partially edges of the cycle at interval
boundaries and partially long range edges.
We have $2l$ edges at the interval boundaries and the
number of long range edges $L$ follows a $Binom(s(n-s),2n^{-\alpha})$
distribution. According to Equation \ref{EQ:M3INITIALSTEP} a subset
violates the conductance bound we proposed if
$$\fixcu\frac{n^{1-\alpha}}{\log n}>\frac{|\partial
  S|}{|S|}=\frac{L+2l}{s}.$$
We introduce the new notation $\fixcu$ because its value is important, as we will see.
The probability of this violation to happen for a certain set $S$ can be written in
the following way:
$$p=P\left( L
<s\fixcu\frac{n^{1-\alpha}}{\log n}-2l\right).$$
Let us introduce the temporary notation $r=s\fixcu n^{1-\alpha}/\log n-2l$. If $r\le
0$, then the above probability is 0, and we are done. If
not, then we have the implied inequality
\begin{equation}
\label{EQ:NODESINTERVALS}
s\fixcu n^{1-\alpha}>2l\log n.
\end{equation}
In this
case we have to find an upper bound on $p$.
First using $s\le n/2$ we see
$$p = P(Binom(s(n-s),2n^{-\alpha}) < r) \le P(Binom(sn/2,2n^{-\alpha})
< r).$$
We are going to use the following version of Chernoff's inequality, see e.g.,
Mitzenmacher and Upfal \cite{mitzenmacher2005probability}:
$$P(Binom(N,q) < (1-\eta)Nq) \le \exp(-Nq\eta^2/2),$$
which holds for $\eta\in (0,1)$.
In our case we have $N=sn/2,~q=2n^{-\alpha}$ and $\eta = 1 -
r/(sn^{1-\alpha})$, therefore the inequality gives
$$p \le \exp\left(-\frac{1}{2}sn^{1-\alpha}\left(1-\frac{r}{sn^{1-\alpha}}\right)^2\right).$$
We may simplify the squared term using the positivity of $\fixcu, l$,
for $n$ large enough:
$$\left(1-\frac{r}{sn^{1-\alpha}}\right)^2 =
\left(1-\frac{\fixcu}{\log n} + \frac{2r}{sn^{1-\alpha}}\right)^2 \ge
\left(1-\frac{\fixcu}{\log n}\right)^2\ge 1-2\fixcu.$$
Substituting this to the inequality above we get
$$
p \le \exp\left(\left(\fixcu - \frac{1}{2}\right)sn^{1-\alpha}\right).
$$
Now let us collect all subsets $S$ of $s$ nodes and $l$ intervals. The
probability that there is one which violates the conductance is at
most $kp$. Using Equation \ref{EQ:NODESINTERVALS} we have an upper
bound for $k$,
$$\log k < 4l\log n<(2\fixcu)sn^{1-\alpha}.$$
Let us join our previous estimates. For $n$ large enough we have
$$
\log(kp) < (2\fixcu)sn^{1-\alpha} +
\left(\fixcu - \frac{1}{2}\right)sn^{1-\alpha} =
\left(3\fixcu-\frac{1}{2}\right)sn^{1-\alpha}.
$$
For $\fixcu\le 5/18$ we get a coefficient at most $-1/6$. From Equation
\ref{EQ:NODESINTERVALS} again,
$$-\frac{1}{6}sn^{1-\alpha}<-\frac{l}{3\fixcu}\log n.$$
Here we need $\fixcu\le 1/9$ to get at most $-3\log n$. After all,
with the proper $\fixcu$ we end
up with
$$kp<\frac{1}{n^3}.$$

It is only left to sum over all possible $s$ and $l$
values. This introduces an extra $n^2$ term, but the probability remains
asymptotically 0. In the end we see the lower bound on the conductance is false only with
asymptotically vanishing probability.

Let us now turn our attention to the upper bound. If we find an arc $A$ that is at
least $c n^{\alpha-1}\log n $ long with no long range edges
going out of it then we
can use the same estimate as before:
$$\Phi\le
\frac{Q(A,A^C)}{\pi(A)\pi(A^C)} \le
\frac{2n^{-1}}{cn^{\alpha-2}\log n\cdot 1/2}=c\frac{n^{1-\alpha}}{\log n}.$$
Again, we have to be careful with the constants. We will search for an
arc at least $\fixco n^{\alpha-1}\log n$ long, and we will specify
$\fixco$ later.
To do this, let us split the cycle into arcs of length $b =
\fixco n^{\alpha-1}\log n$. We define a graph on these arcs, we connect two
of them if there is any long range edge between them. According to the
independence of the edges this is in fact
an Erd\ho s-R\'enyi random graph. Our goal translates to finding an
isolated node in it.

For a sequence of Erd\ho s-R\'enyi graphs on $m$ nodes with edge
probability $q$ it is known
\cite{erdos1959random} that they have isolated nodes a.a.s.\ if
$m\ra\infty$ but $q < (1-\varepsilon)\frac{\log m}{m}$ for some fixed
$\varepsilon > 0$.
In our
case the number of nodes is
$$m = \frac n b = \frac{n^{2-\alpha}}{\fixco\log n}.$$
We can bound the edge probability in the new graph by adding up the
appropriate edge probabilities in the original graph:
$$q \le b^22n^{-\alpha}=2\fixco^2n^{\alpha-2}\log^2 n.$$
We have to compare this quantity with the following:
$$\frac{\log m}{m} = \fixco n^{\alpha-2}\log n((2-\alpha)\log n - \log \fixco -
\log\log n).$$
The major term is the first one, which is fortunately of the same
order as $q$. In order to have an isolated node a.a.s. we simply need
\begin{align*}
2\fixco^2&<\fixco(2-\alpha),\\
\fixco&<1-\frac{\alpha}{2}.
\end{align*}
There was no other restriction on $\fixco$ apart from being positive so we
can choose it to satisfy this last inequality. This concludes the proof.
\end{proof}

\section{Mixing time bounds}
\label{SEC:RANDMIXTIME}

Let us now move on to estimate the mixing time itself. The first
result is a lower bound based on a previous result for cycles
without added edges.
\begin{proposition}
Consider the graph model M1, let us also assume the nodes with long range edges are equidistant
from each other. Then for any homogeneous chain,
$$cn^{2\alpha-2} \le t_{\rm mix}.$$
\end{proposition}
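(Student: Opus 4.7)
The plan is to exploit the equidistance assumption by projecting the chain onto a much smaller cycle, and then apply Theorem \ref{THM:SLOWMIX} to the projected chain. Let $\ell = n^{\alpha-1}/2$ be the arc length between consecutive special (long-range-endowed) nodes, so there are $2n^{2-\alpha}$ equally spaced special nodes, namely the positions $0,\ell,2\ell,\ldots$ in $\mathbb{Z}_n$. Consider the map $\phi:\mathbb{Z}_n \to \mathbb{Z}_\ell$ given by $\phi(i) = i \bmod \ell$, so the cell $B_j = \phi^{-1}(j)$ collects all nodes at position $j$ within their arc. In particular, $B_0$ is exactly the set of special nodes, while $B_j$ for $j\neq 0$ consists entirely of non-special nodes.

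The first step is to check that $\phi(X_t)$ is itself a Markov chain on $\mathbb{Z}_\ell$, i.e.\ that the original chain is lumpable with respect to this partition. For any cell $B_j$ with $j\neq 0$, every node has no long range edge and so transitions to $B_{j\pm 1}$ with probability $q_c\pm r$ and stays in $B_j$ with probability $1-2q_c$. For cell $B_0$, every special node has a long range edge leading to another special node, which therefore also lies in $B_0$; hence for any $x\in B_0$ the transition probabilities are $q_c\pm r$ to $B_{\pm 1}$ and $(1-2q_c-q_l/d)+q_l/d = 1-2q_c$ to $B_0$. The transition probabilities $P(x,B_k)$ depend only on the cell of $x$, so lumpability holds and the projected chain is well defined.

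Next, I observe that the projected chain is a doubly stochastic chain on a cycle of $\ell$ nodes: its stationary distribution inherits uniformity from the uniform stationary distribution on $\mathbb{Z}_n$ (all cells have equal size $n/\ell = 2n^{2-\alpha}$), and its connectivity graph is exactly the $\ell$-cycle (long range edges collapsed to self-loops are absorbed into the holding probability). Theorem \ref{THM:SLOWMIX} then yields $t_{\rm mix}(\phi(X_t)) \ge C\ell^2 = C' n^{2\alpha-2}$.

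Finally, I use the standard monotonicity of total variation distance under projection: if $\sigma P^k$ is within $\varepsilon$ of the uniform distribution on $\mathbb{Z}_n$, then its pushforward under $\phi$ is within $\varepsilon$ of the uniform distribution on $\mathbb{Z}_\ell$; hence $t_{\rm mix}$ of the original chain is at least $t_{\rm mix}$ of the projected chain, giving the claimed bound $cn^{2\alpha-2}\le t_{\rm mix}$. The main obstacle is really just verifying lumpability at cell $B_0$; once one realises that the random matching on the special nodes stays inside $B_0$ under $\phi$, the argument reduces cleanly to Theorem \ref{THM:SLOWMIX}. A minor technicality is the mismatch of the $\varepsilon$ parameter between that theorem (stated for $\varepsilon=1/8$) and our fixed $\varepsilon$, but this only affects the constant $c$.
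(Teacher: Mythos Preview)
Your proposal is correct and takes essentially the same approach as the paper: project via $i\mapsto i\bmod \ell$ onto a cycle of $\ell=n^{\alpha-1}/2$ nodes (the paper phrases this as ``winding up'' the chain), observe that the long range edges collapse to self-loops so the projected chain is a doubly stochastic chain on a cycle, and invoke Theorem~\ref{THM:SLOWMIX}. Your version is more explicit about verifying lumpability at the cell $B_0$, which the paper leaves as an informal observation.
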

\begin{proof}
Observe that we can ``wind up'' the chain around a cycle of
$n^{\alpha-1}/2$ nodes so that long range edges become loop edges, see
Figure \ref{FIG:WINDUP}.
\begin{figure}[ht]
\centering
\includegraphics[width=0.7\columnwidth]{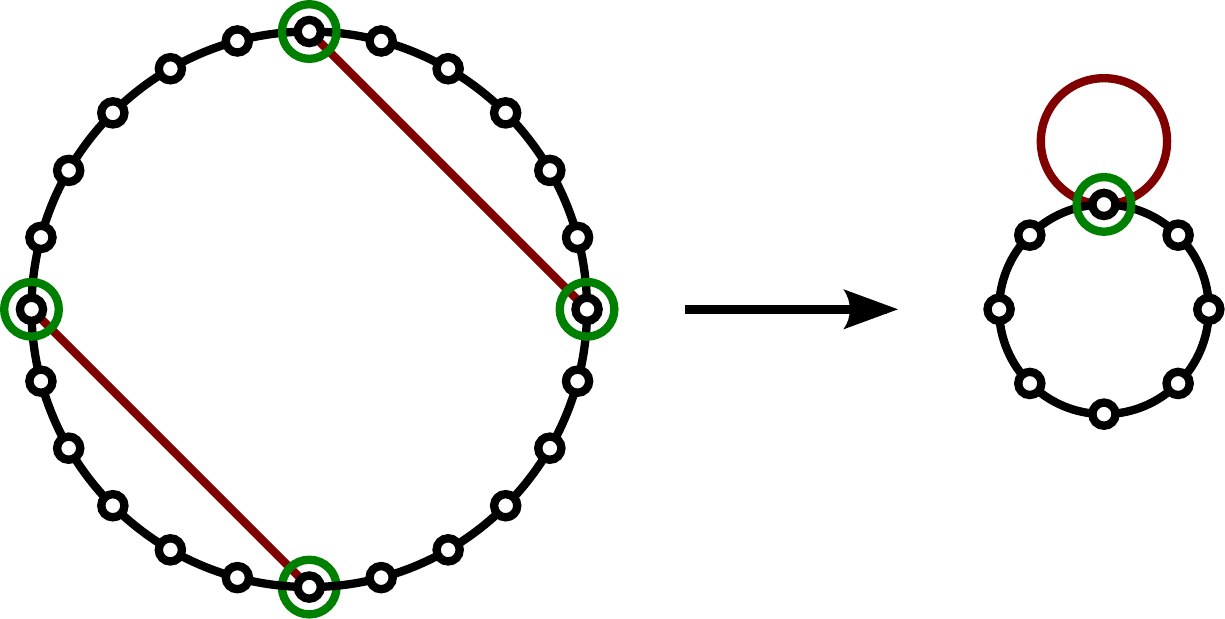}
\caption{Reducing M1 graphs}\label{FIG:WINDUP}
\end{figure}
Let us choose any starting distribution on the original chain. It is
easy to see that we get the same if we project the starting distribution on
the reduced graph and run the Markov chain there or
if we run the original Markov chain and project the resulting distribution.
Consequently the lower bound from Theorem \ref{THM:SLOWMIX} for the mixing
time of the reduced graph is also valid for the mixing time
of the original chain.
\end{proof}

The same claim is true if the nodes with long range edges are not
exactly equidistant, but the Markov chain is reversible.
\begin{proposition}
\label{PRP:M1REVLOW}
Consider the graph model M1. Then for any reversible homogeneous chain,
$$cn^{2\alpha-2} < t_{\rm mix}.$$  
\end{proposition}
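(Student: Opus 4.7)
The plan is to exploit the absence of drift ($r=0$) in a reversible homogeneous chain. On any arc $A$ between two consecutive special nodes, the chain behaves as a lazy symmetric random walk until it reaches the endpoints of $A$, because the interior of $A$ contains no long range edges. Such a walk takes $\Omega(\ell^2)$ steps to cross an arc of length $\ell$, and the arcs in M1 have length $\ell = n^{\alpha-1}/2 + O(1)$, so this already produces the desired time scale $n^{2\alpha-2}$. A spectral-gap / Dirichlet-form approach with a natural cosine-type test function only gives $n^{\alpha-1}$, since the long range edges contribute too heavily to the Dirichlet numerator, so a direct hitting-time estimate is the right tool.

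Concretely, I would fix any arc $A$, let $m$ denote its midpoint, start the chain at $m$, and let $T$ be the first time it reaches $\partial A$, the two endpoint special nodes. Because the interior of $A$ contains no special nodes and $r=0$, the trajectory $(X_s)_{s<T}$ is distributed exactly as a lazy symmetric random walk on an integer segment of length $\ell$ started at its midpoint. A standard combination of the reflection principle and Hoeffding's inequality then gives
$$P_m(T \le c_1 n^{2\alpha-2}) \le 1/3$$
for a constant $c_1>0$ depending only on $q_c$. Setting $t = c_1 n^{2\alpha-2}$ yields $P^t(m, A) \ge 2/3$, while $\pi(A) = \ell/n = O(n^{\alpha-2})$, so
$$\|P^t(m,\cdot) - \pi\|_{\rm TV} \ge P^t(m, A) - \pi(A) \ge 1/2,$$
which exceeds the fixed tolerance $\varepsilon$ (we may assume $\varepsilon < 1/2$). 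Hence $t_{\rm mix} > c_1 n^{2\alpha-2}$.

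The place where reversibility is essential is the hitting-time step: with any nonzero drift $r > 0$ the cycle component of the chain is ballistic at speed of order $r$, letting it exit an arc in time $O(\ell/r) = O(n^{\alpha-1})$ and collapsing the bound down to the conductance estimate. This is the main conceptual content of the argument; the remaining work, namely writing out the reflection-principle / Hoeffding bound with constants uniform in $n$ and observing that the $O(1)$ variation in arc length coming from the word ``almost'' in the equidistance of special nodes is absorbed into $c_1$, is routine.
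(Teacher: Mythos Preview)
Your proof is correct and follows exactly the same approach as the paper: start at the midpoint of an arc of length $\Theta(n^{\alpha-1})$, observe that until exit the chain is a lazy symmetric walk (since $r=0$), and use the standard $\Omega(\ell^2)$ exit-time bound to conclude the chain is still trapped in $A$ at time $c_1 n^{2\alpha-2}$. The paper's proof is a two-sentence sketch of this same argument; you have simply filled in the details (reflection/Hoeffding for the exit time, the explicit TV comparison with $\pi(A)$) and added the helpful remark on why the bound collapses when $r\neq 0$.
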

\begin{proof}
By the definition of the graph model M1 there are arcs at least $cn^{\alpha-1}$ long without a long range edge.
Let us now focus only on one of these arcs.
If the Markov chain is initialized at the center of the arc, it stays
within the arc for at least
$cn^{2\alpha-2}$ steps with probability $1-\delta$. For
small $\delta>0$, this ensures mixing did not yet happen,
consequently $cn^{2\alpha-2}$ is a lower bound on the mixing
time.
\end{proof}

Using Theorem \ref{THM:LOVSIMON} together with the conductance bound
Theorem \ref{THM:M1COND} for M1 graphs we can complement this result
with an upper bound.
\begin{theorem}
\label{THM:M1HOMO}
Consider the graph model M1. The mixing time of the homogeneous lazy chain on
such graphs satisfies the following inequality a.a.s.:
$$c_1n^{\alpha-1} < t_{\rm mix} < c_2d(\alpha)^2 n^{2\alpha-2}\log
n.$$
\end{theorem}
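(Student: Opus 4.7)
The plan is to obtain both bounds by combining the conductance estimates of Theorem \ref{THM:M1COND} with the generic conductance-to-mixing-time inequalities stated in the preliminaries; no new combinatorial work on the random graph itself is required, since Theorem \ref{THM:M1COND} already delivers the conductance asymptotics a.a.s.

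For the upper bound, I would invoke Theorem \ref{THM:LOVSIMON}, which applies because the homogeneous chain is assumed lazy (as arranged in the remark following Theorem \ref{THM:MCFEASIBLE}, choosing $2q_c+q_l\le 1/2$). This gives $t_{\rm mix}\le c\,\Phi^{-2}\log(1/\pi_*)$. Since the stationary distribution is uniform on $n$ states, $\log(1/\pi_*)=\log n$. Substituting the a.a.s.\ lower bound $\Phi\ge c_1 d(\alpha)^{-1}n^{1-\alpha}$ from Theorem \ref{THM:M1COND} yields $t_{\rm mix}\le c_2\,d(\alpha)^2 n^{2\alpha-2}\log n$ a.a.s., which is the desired upper bound.

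For the lower bound, I would apply Proposition \ref{PRP:CONDMIXTIME1}, giving $t_{\rm mix}\ge c/\Phi$. Inserting the a.a.s.\ upper bound $\Phi\le c_2 n^{1-\alpha}$ from Theorem \ref{THM:M1COND} produces $t_{\rm mix}\ge c_1 n^{\alpha-1}$ a.a.s. Note that this conductance-based bound is weaker than the $n^{2\alpha-2}$ bound of Proposition \ref{PRP:M1REVLOW}, but Proposition \ref{PRP:M1REVLOW} required reversibility whereas here we only assume laziness, so we fall back to the universal conductance lower bound.

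There is no substantive obstacle: the only mild care needed is to verify that the hypotheses of Theorem \ref{THM:LOVSIMON} (laziness, double stochasticity, irreducibility, aperiodicity) are satisfied a.a.s.\ for the homogeneous chain, which follows from Theorem \ref{THM:MCFEASIBLE} together with the parameter choice ensuring $p_{ii}\ge 1/2$. Since both conductance inequalities in Theorem \ref{THM:M1COND} hold on the same a.a.s.\ event, the two bounds can be asserted jointly.
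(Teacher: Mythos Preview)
Your proposal is correct and matches the paper's approach exactly: the theorem is stated immediately after the sentence ``Using Theorem \ref{THM:LOVSIMON} together with the conductance bound Theorem \ref{THM:M1COND} for M1 graphs we can complement this result with an upper bound,'' and no further proof is given. Your identification of Proposition \ref{PRP:CONDMIXTIME1} as the source of the lower bound (via the upper conductance estimate in Theorem \ref{THM:M1COND}) is the intended reading, and your remark distinguishing this from the stronger reversible-only bound of Proposition \ref{PRP:M1REVLOW} is precisely the point the paper is making by listing the two propositions separately before this theorem.
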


For graphs from the model M2 we formulate bounds separately for
reversible and non-reversible Markov chains.
\begin{theorem}
\label{THM:M2HOMO}
Consider the graph model M2. The mixing time of the reversible homogeneous chain on
such graphs satisfies the following inequality a.a.s.:
$$c_1n^{2\alpha-2}\log^2 n < t_{\rm mix} < c_2d(\alpha)^2 n^{2\alpha-2}\log^3
n.$$
In the case of lazy non-reversible homogeneous chains this changes to
$$c_1n^{\alpha-1}\log n < t_{\rm mix} < c_2d(\alpha)^2 n^{2\alpha-2}\log^3
n.$$
\end{theorem}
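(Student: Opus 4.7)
The plan is to combine the conductance estimates of Theorem \ref{THM:M2COND} with the standard conductance--mixing time correspondences (Theorem \ref{THM:JSMIXTIME}, Theorem \ref{THM:LOVSIMON}, and Proposition \ref{PRP:CONDMIXTIME1}) for three of the four bounds, and to adapt Proposition \ref{PRP:M1REVLOW} for the sharper reversible lower bound.

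For the common upper bound $c_2 d(\alpha)^2 n^{2\alpha-2}\log^3 n$, I would plug the a.a.s.\ lower bound $\Phi \ge c\, d(\alpha)^{-1} n^{1-\alpha}/\log n$ supplied by Theorem \ref{THM:M2COND} into Theorem \ref{THM:JSMIXTIME} in the reversible case, and into Theorem \ref{THM:LOVSIMON} in the lazy non-reversible case. Since $\pi_* = 1/n$, both theorems yield $t_{\rm mix} \le c\, \Phi^{-2}\log n$, which produces the claimed upper bound in one shot for both cases.

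For the non-reversible lower bound $c_1 n^{\alpha-1}\log n$, I would use Proposition \ref{PRP:CONDMIXTIME1} together with the a.a.s.\ upper bound $\Phi \le c\, n^{1-\alpha}/\log n$ from Theorem \ref{THM:M2COND}: a single substitution gives $t_{\rm mix} \ge c/\Phi \ge c_1 n^{\alpha-1}\log n$.

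For the reversible lower bound $c_1 n^{2\alpha-2}\log^2 n$, I would follow the strategy of Proposition \ref{PRP:M1REVLOW}, but with a considerably longer ``empty'' arc. The Slud--Devroye estimate already invoked in the proof of Theorem \ref{THM:M2COND} shows that a.a.s.\ there is an arc $A$ of length at least $c\, n^{\alpha-1}\log n$ containing no endpoint of any long range edge. When $r=0$, on the interior of $A$ the chain evolves as a symmetric lazy random walk on a path: both neighbour transitions have probability $q_c$, so there is no drift. Starting from the midpoint of $A$ and running for $T = c\, n^{2\alpha-2}\log^2 n$ steps, a second-moment estimate (or invariance principle) shows that the walker has not reached either endpoint of $A$ with probability at least $1-\delta$. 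Hence at time $T$ the distribution is supported on at most half of $A$, so its total variation distance to the uniform distribution on $\cX$ stays bounded away from $0$, ruling out mixing and giving $t_{\rm mix} > c_1 n^{2\alpha-2}\log^2 n$. The main obstacle is this reversible lower bound: the other three bounds are essentially one-line consequences of the already established conductance estimates, while here one must use reversibility to rule out a net drift in the empty arc (automatic for the homogeneous chain with $r=0$) and then control the exit time via a variance argument, which is the same mechanism as in Proposition \ref{PRP:M1REVLOW} with the improved, probabilistic arc-length estimate of Slud--Devroye replacing the deterministic one available for model M1.
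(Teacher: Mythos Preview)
Your proposal is correct and follows essentially the same route as the paper: the paper derives the upper bounds and the weaker (non-reversible) lower bound directly from the conductance estimates of Theorem~\ref{THM:M2COND} via Theorem~\ref{THM:LOVSIMON} and Proposition~\ref{PRP:CONDMIXTIME1}, and obtains the sharper reversible lower bound by repeating the argument of Proposition~\ref{PRP:M1REVLOW} on the empty arc of length $c\,n^{\alpha-1}\log n$ whose existence was established (via Slud--Devroye) in the proof of Theorem~\ref{THM:M2COND}. Your only deviation is invoking Theorem~\ref{THM:JSMIXTIME} rather than Theorem~\ref{THM:LOVSIMON} for the reversible upper bound, which is harmless since both yield the same estimate.
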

\begin{proof}
The upper bounds and the weaker lower bounds follow by combining Theorem
\ref{THM:LOVSIMON} with the conductance bounds Theorem \ref{THM:M2COND}.
The sharper bounds for reversible chains follow the same way as for
Proposition \ref{PRP:M1REVLOW}. This time the longest arc without a
long range edge is at least $cn^{\alpha-1}\log n$ long a.a.s.\ as shown
during the proof of Theorem \ref{THM:M2COND}.
\end{proof}

In a similar way we can acquire mixing time bounds for graphs from the
model M3.
\begin{theorem}
\label{THM:M3HOMO}
Consider the graph model M3. The mixing time of the reversible homogeneous chain on
such graphs satisfies the following inequality a.a.s.:
$$c_1n^{2\alpha-2}\log^2 n < t_{\rm mix} < c_2d(\alpha)^2 n^{2\alpha-2}\log^3
n.$$
In the case of lazy non-reversible homogeneous chains this changes to
$$c_1n^{\alpha-1}\log n < t_{\rm mix} < c_2d(\alpha)^2 n^{2\alpha-2}\log^3
n.$$
\end{theorem}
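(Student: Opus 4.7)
The plan is to mimic the proof of Theorem \ref{THM:M2HOMO} essentially verbatim, since the conductance bounds of Theorem \ref{THM:M3COND} have exactly the same order as those of Theorem \ref{THM:M2COND}. The upper bounds, valid for both the reversible and the non-reversible case, and the weaker lower bound for non-reversible chains are routine applications of the general tools; only the sharper $n^{2\alpha-2}\log^2 n$ lower bound for reversible chains requires a dedicated argument.

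First I would derive the common upper bound $t_{\rm mix}<c_2 d(\alpha)^2 n^{2\alpha-2}\log^3 n$ by plugging the conductance lower bound $\Phi\ge c\,d(\alpha)^{-1}n^{1-\alpha}/\log n$ of Theorem \ref{THM:M3COND} into Theorem \ref{THM:LOVSIMON}, which applies because the homogeneous chains are lazy by the choice of parameters described after Theorem \ref{THM:MCFEASIBLE}. With $\log(1/\pi_*)=\log n$ this yields the desired $\log^3 n$ factor. For the weaker $c_1 n^{\alpha-1}\log n$ lower bound in the non-reversible case I would apply Proposition \ref{PRP:CONDMIXTIME1} to the conductance upper bound $\Phi\le c_2 n^{1-\alpha}/\log n$ of the same theorem.

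The sharper lower bound $c_1 n^{2\alpha-2}\log^2 n$ in the reversible case would be obtained by the same localization argument as in Proposition \ref{PRP:M1REVLOW}. The crucial input is that an M3 graph a.a.s.\ contains an arc $A$ of length at least $\fixco n^{\alpha-1}\log n$ with no long range edge attached to it; this is exactly what was produced during the proof of the conductance upper bound in Theorem \ref{THM:M3COND}, via the isolated-vertex threshold for the auxiliary Erd\ho s--R\'enyi graph obtained by blocking the cycle. Starting the reversible homogeneous chain at the center of $A$, the drift $r$ vanishes, so while the walker stays inside $A$ it behaves as a lazy symmetric random walk on an interval. A standard central limit / variance estimate then shows that its displacement after $k$ steps is $O(\sqrt k)$ with high probability, so for $k\le c'n^{2\alpha-2}\log^2 n$ the walker has not reached either endpoint of $A$ with probability $1-\delta$. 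The resulting distribution is still concentrated on one half of the cycle and has total variation distance $\ge 1/2-\delta$ from the uniform distribution, preventing mixing.

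The only non-routine point, and the reason the bound cannot be pushed up in general, is precisely that this localization argument is intrinsically reversible: a nonzero drift $r$ would expel the walker from a length-$n^{\alpha-1}\log n$ arc in $O(n^{\alpha-1}\log n)$ steps, and the sharper lower bound collapses to the conductance-based $n^{\alpha-1}\log n$ level. Apart from this reversibility/drift dichotomy, every step of the argument is a direct transcription of the corresponding step in the M2 proof with the conductance estimate of Theorem \ref{THM:M3COND} substituted in.
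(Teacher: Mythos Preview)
Your approach is essentially the paper's, but there is one genuine gap in the reversible lower bound. The isolated-vertex argument in the proof of Theorem~\ref{THM:M3COND} only shows that the arc $A$ has no long range edge going \emph{out} of it to another block of the auxiliary Erd\ho s--R\'enyi graph; it does not exclude long range edges connecting two nodes \emph{within} $A$. Your localization argument, however, needs $A$ to be completely free of long range edges so that the walk started at the center of $A$ is a genuine nearest-neighbour symmetric walk on an interval.

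The paper explicitly closes this gap. It notes that the internal long range edges of $A$ are independent of the outgoing ones, and computes the probability that there are none of them:
\[
(1-2n^{-\alpha})^{(cn^{\alpha-1}\log n)^2} > \exp\!\left(-3c\,\frac{\log^2 n}{n^{2-\alpha}}\right)\longrightarrow 1,
\]
so a.a.s.\ the isolated arc carries no long range edge at all. Once you insert this short computation, your proof coincides with the paper's.
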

\begin{proof}
Again, we use Theorem \ref{THM:LOVSIMON} and the appropriate
conductance estimate, now Theorem \ref{THM:M3COND}. We also want to
reuse the claim based on the existence of a long arc without long
range edges. However, during the proof of Theorem \ref{THM:M3COND} we
only showed that there is an arc of length $cn^{\alpha-1}\log n$ such
that there is no long range edge going \emph{out} of it. The long
range edges going within the arc are independent from the ones going
out, the probability of having none within the arc is
$$(1-2n^{-\alpha})^{\ds{(cn^{\alpha-1}\log n)^2}} = (1-2n^{-\alpha})^{\ds{n^\alpha
  c\frac{\log^2 n}{n^{2-\alpha}}}}>e^{\ds{-3c\frac{\log^2 n}{n^{2-\alpha}}}}$$
This is 1 in the limit, consequently the arc we have chosen does not have any long
range edge at all a.a.s. Therefore we can apply the same reasoning as before.
\end{proof}

These results allow us to have an insight on the order of magnitude of
the mixing time. In some cases we know the polynomial part exactly and
have a difference only in the logarithmic part. Other studies, like 
Addario-Berry and Lei \cite{addarioberry:swnmixing2012}
or Krivelevich, Reichman and Samotij \cite{krivelevich2013smoothed}
suggest that it might be possible to reduce the $\log^3n$ terms in the
upper bounds to $\log^2n$ but this is left as future work.

The bounds we got for reversible chains provide reasonably tight estimates. For
non-reversible chains it is still unclear where the mixing time really
is between these bounds.

\section{Conclusions and future work}
\label{SEC:CONC}

In the case of reversible chains we have obtained bounds on the
mixing times for the random graphs of models M1, M2, M3.
All these bounds are of the form $cn^{2\alpha-2}\log^\delta n$
with $\delta$ differing by one between the lower and upper bound
for each
specific choice of model parameters.
Closing this gap is left as a future work which could be within
reach by borrowing techniques from Addario-Berry and Lei \cite{addarioberry:swnmixing2012}
or Krivelevich, Reichman and Samotij \cite{krivelevich2013smoothed}.
In the dominant part $n^{2\alpha-2}$ the exponent may take on all possible values
between $0$ and $2$. The limiting case of $n^0$ is known to correspond
to the case of Small World Network of Newman et al.,
see \cite{durrett:rgd}, \cite{addarioberry:swnmixing2012}, the case
of $n^2$ corresponds to the case of a plain cycle, see \cite{gb:ringmixing2011}.

The situation is more diverse for non-reversible
Markov chains.
For homogeneous M1 chains with some additional
restrictions, we have shown that the mixing time does not decrease
compared to reversible Markov chains, having the same $n^{2\alpha-2}$
lower bound as before. On the other hand, in general the lower bounds
drop to $cn^{\alpha-1}\log^\delta n$. This indicates the possibility of having significantly
lower mixing times. Indeed, simulations suggest that there is a considerable gain for non-reversible chains.
In Figure \ref{FIG:MIXINGSIM} we plot the mixing times of
homogeneous reversible and non-reversible chains on several graphs
coming from model M2 with $\alpha=1.5$.

\begin{figure}[ht]
\centering
\includegraphics[width=0.75\columnwidth]{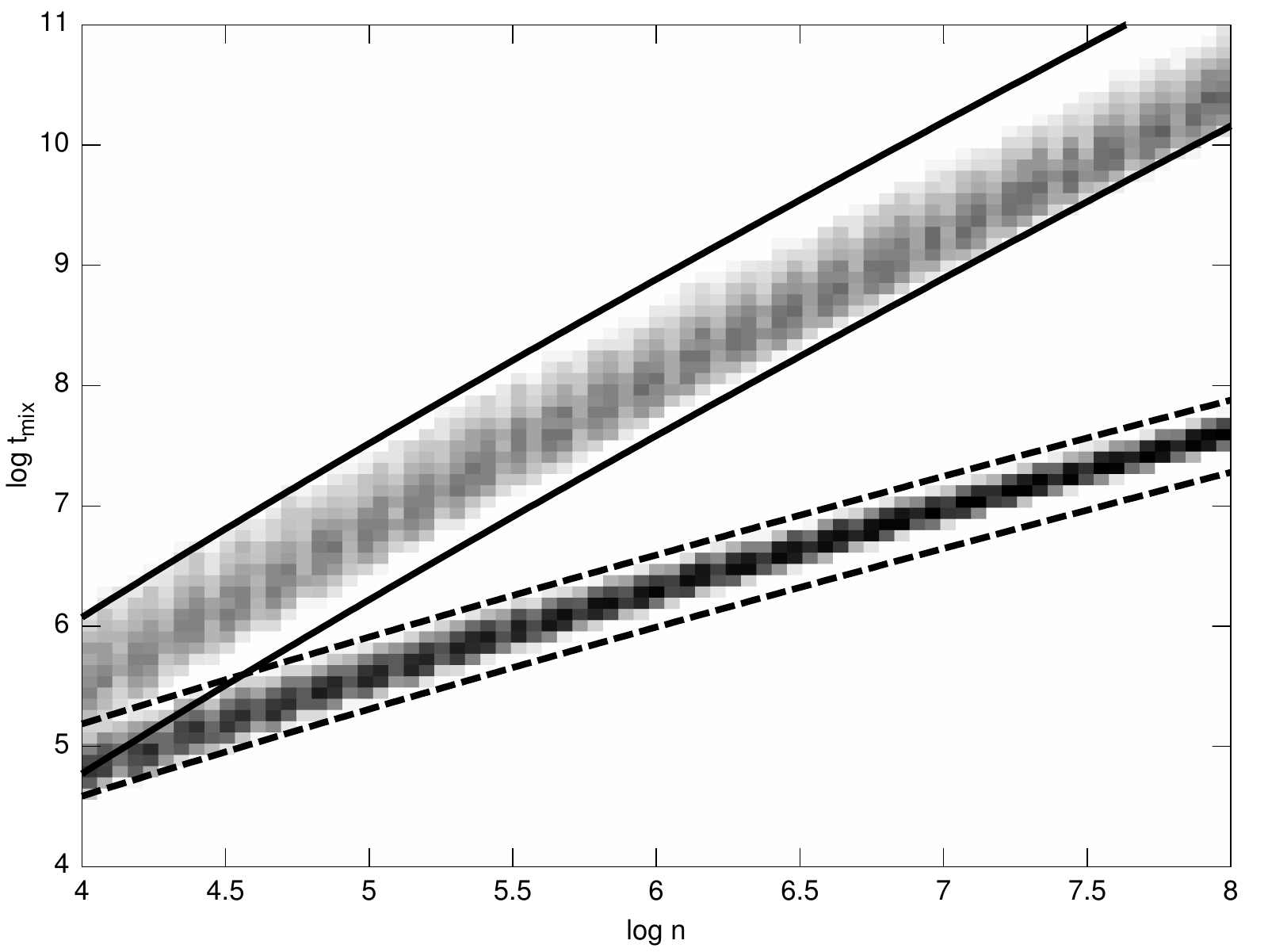}\hspace{0.06\columnwidth}
\caption{Log-log plot for mixing times of homogeneous M2 chains}\label{FIG:MIXINGSIM}
\end{figure}

This is a log-log scaled histogram using more than $70000$ random graphs in
total. Because we aim to understand the typical behavior, we discarded the
lowest and highest $5\%$ for each graph size $n$. We see two clusters,
the upper one contains the mixing times of all the reversible chains,
while the lower one contains that of all non-reversible chains.
For comparison, we include two solid lines corresponding to 
$cn\log^2n$ and two dashed lines indicating $c\sqrt{n}\log n$.

A challenging open problem is the clarification of this decrease of
mixing times of non-reversible M2 chains. In general, it would be
interesting to find other connectivity graphs,
where introducing non-reversible Markov chains offers strong speedup
compared to reversible ones, without changing the underlying graph.

A further interesting research direction might be to extend the results for time-inhomogeneous Markov
chains. In the case of the cycle, when every transition matrix is doubly stochastic and
also reversible, it is easy to show that the mixing time is at least of the order of
$n^2$. However, the case of doubly stochastic but non-reversible transition
matrices is still open, it is unclear if the result of
\cite{gb:ringmixing2011} can be extended to this case. On the other
hand, if we relax the condition on the transition matrices by not
requiring them to be doubly stochastic, we can significantly
improve the mixing time. In particular it is known that the mixing
time can be as low as $n$. 

\section*{Acknowledgments}
I would like to express my thanks to M\'arton Isp\'any for his insightful
and encouraging questions.

\bibliographystyle{siam}
\bibliography{ringmixing,swn,other}

\end{document}